\documentclass[english,a4paper,11pt]{article}
\usepackage{lmodern}

\usepackage{amssymb,amsmath,amsfonts,amsthm,epsfig,tikz,amscd} 
\usepackage{authblk}   
\usepackage{hyperref}
\usepackage{cite}

\usepackage[width=0.85\textwidth ]{caption}

\newcommand{\rd}{\textcolor{red}}

\usepackage{hyperref}
\newtheorem{theorem}{Theorem}[section]

\newtheorem{prop}[theorem]{Proposition}

\title{Dynamics of planar integrable Kepler billiards with a focused hyperbolic branch}

\author[1]{Daniel Jaud}
\affil[1]{\footnotesize Gymnasium Holzkirchen, Germany, Daniel.Jaud.PhD@gmail.com}

\author[2]{Lei Zhao}
\affil[2]{\footnotesize School of Mathematical Sciences, Dalian University of Technology, zhao1899@dlut.edu.cn}

\begin{document}
\maketitle
\flushbottom


\begin{abstract}
\noindent
We consider the integrable dynamics of a planar Kepler billiard in the plane bounded by a branch of a hyperbola focused at the Kepler center. As a sequel to our previous work \cite{JZ2}, we use the existence of the foci-caustic circle to identify an elliptic curve on which the dynamics is linearized and we analyze Cayley's condition on $n$-periodic orbits in this setting. By taking various limits also we discuss the dynamics of planar Kepler billiards with boundary being a focused parabolic arc,  as well as a straight line. In this last case, our approach offers a different way to deduce the elliptic curve as in \cite{Felder}.
\end{abstract}

\vspace*{0.4cm}
{\textbf{Keywords:} billiards,  central potential, Kepler problem, Poncelet porism}

\vspace*{0.4cm}
{\textbf{MSC-Classification:} 14H70, 37C79, 37J99, 37N05}


\section{Introduction and setup}
The classical Poncelet theorem asserts that, given two conic sections in the plane, the existence of a polygon inscribed in one conic and circumscribed about the other implies the existence of an infinite family of such polygons. From a modern point of view, the underlying dynamics can be linearized on an elliptic curve, with the Poncelet construction corresponding to a translation by a fixed element of this curve. If this translation is of finite order, every orbit is periodic. Restricting the construction to the real part of the elliptic curve then yields the classical Poncelet porism, namely the existence of a one-parameter family of real polygons with the same periodicity property. 

The relation between the Poncelet porism and billiard dynamics has long been an important theme in the theory of integrable systems. Besides the classical billiard inside an ellipse, a variety of mechanical billiard systems remain integrable in the presence of a potential. Of particular interest are billiards with a Kepler/Coulomb potential. Several integrable examples of such systems are known, including the case of a straight reflection wall \cite{B, GJ} as well as billiards bounded by conic sections whose focus coincides with the center of the Kepler force. The dynamical investigation has been carried out for the straight line boundary case in \cite{Felder, GR}, while the dynamics of Kepler billiard inside a focused ellipse has been studied in \cite{JZ2}.

In the present work, we investigate the dynamics of an integrable Kepler billiard with a hyperbolic reflection boundary. In our setting, the reflection wall is one branch of a non-degenerate hyperbola, and the Kepler center lies on a focus.  As in \cite{JZ2}, we shall again trace the iteration of the second orbital foci, which all lie on a common circle that we call \emph{foci-circle}.


Parallel to the elliptic case, a key geometric observation is that the lines  joining consecutive second orbital foci are all tangent to a second fixed circle, the \emph{foci-caustic} circle. Its center and radius depend only on the parameters of the hyperbolic boundary and on the conserved quantities of the Kepler billiard. Hence the dynamics of the iterated second foci can again be reduced to a \emph{bicircular Poncelet situation}: consecutive points lie on a fixed outer circle, while the secants joining them are tangent to another fixed circle.

This reduction provides a direct link between the hyperbolic Kepler billiard and the classical theory of elliptic curves. Indeed, after complexification, the two circles defining the Poncelet configuration determine a pencil of quadrics and an associated algebraic curve of genus one. The billiard map on the sequence of second foci is thereby represented by a translation on the corresponding elliptic curve as discussed in the classical work by Griffiths and Harris \cite{GH2}. Periodic billiard trajectories are characterized by the condition that this translation has finite order, which recovers the classical condition of Cayley on periodicity \cite{DR2}.

While the elliptic reflection boundary leads to a particularly transparent global geometry \cite{JZ}, the hyperbolic case exhibits a richer collection of real configurations. Depending on the chosen branch of the hyperbola the relative position of the foci-circle and the foci-caustic circle may vary substantially. This affects the structure of the real part of the associated elliptic curve and, consequently, the dynamical interpretation of the corresponding translation. The main purpose of this work is to analyze this change of dynamics in details.

To carry out our analysis, we fix the following setup: 

We consider the motion of a point mass moving under the influence of an attractive Keplerian force with center  $F$ and reflected at a branch of a hyperbola with $F$ as a focus. By convention 
we take the other focus of the hyperbola $F'=(0,0)$ as the origin of the reference frame and put $F=(-2 \, c_{\mathbf{K}},0)$, in which $c_{\mathbf{K}}$ denotes the linear eccentricity.  We denote respectively by
$${\cal{B}}_F:\frac{(x+c_{\mathbf{K}})^2}{a_{\mathbf{K}}^2}-\frac{y^2}{c_{\mathbf{K}}^2-a_{\mathbf{K}}^2}=1,~~~x < - c_{\mathbf{K}}+a_{\mathbf{K}}$$
for the branch of the hyperbola enclosing $F$, and
$${\cal{B}}_{F'}:\frac{(x+c_{\mathbf{K}})^2}{a_{\mathbf{K}}^2}-\frac{y^2}{c_{\mathbf{K}}^2-a_{\mathbf{K}}^2}=1,~~~x > -c_{\mathbf{K}}+a_{\mathbf{K}}$$
for the branch enclosing $F'$. We have denoted the semi-major axis by $a_{\mathbf{K}}$, which satisfies $0<a_{\mathbf{K}}<c_{\mathbf{K}}$.

As in the elliptic case though, the dynamics on both sides of ${\cal{B}}_F$ or ${\cal{B}}_{F'}$ are equivalently up to reversing time. We may assume that the dynamics takes place on the side of the reflection wall not containing $F$ to again avoid the problem of collisions with the center of force associated to an attractive potential\footnote{Note that by Levi-Civita regularization technique, we may as well consider the other side of the reflection wall if we regularize the collisions.}.

By Kepler's law the orbital semi-major axis $a$ is a function of the total energy $E\sim -\frac{1}{2a}$
 of the particle. It is a conserved quantity of the billiard system. Therefore to track the iteration of Keplerian orbits, equivalently we track the iteration of the second orbital foci $F_{i}$ as in \cite{JZ, JZ2}. These second orbital foci $F_i$ all lie on the foci-circle centered at $F'$  with radius $R$, so it is given by the expression
$$S(F', R):x^2+y^2=R^2.$$
The circle ${\cal{C}}_0=S(F', R)$ is our foci-circle.

Analogously as in the elliptic case analyzed in \cite{JZ2}, along a Kepler billiard orbit, the lines containing consecutive second orbital foci $F_iF_{i+1}$ are all tangent to a circle that we call \emph{foci-caustic circle.} This again reduces the analysis to a rather standard, bicircular situation of the Poncelet porism. Based on this and the work of \cite{GH} we identify the elliptic curve on which the dynamics is linearized as well as analyzing how to apply Cayley's criteria on $n$-periodic orbits in this setting. 

In the limit $a_{\mathbf{K}} \to 0$, we recover a planar Kepler billiard system with a line as reflection wall in the plane. This is the integrable Boltzmann system, which has been introduced in \cite{B} and whose integrability is established in \cite{GJ}. Its dynamics has been investigated extensively in \cite{Felder}, in which an associated elliptic curve has been identified using the idea of \cite{GH}. We reproduce this analysis in a simple, now standard way. Apart from the limiting case corresponding to a straight line boundary there exit two other limits, which we consider both corresponding to parabolic arcs. The first limiting case corresponds to a parabolic arc with focus at $F$. The particle trajectories in this case remains Keplerian ellipses. Due to the limiting prozedure the foci-circle degenerates to a straight line. The second limiting case is given by a parabola with focus at $F'$. For this case the Keplerian ellipses degenerate to parabolic arcs associated to the motion of a particle under the influence of a constant attractive force.
 
We organize this article as follows: 

In Section \ref{sec:parameter_range_hyp} we derive the admissible range of parameters $(a,R)$ with given $(a_{\mathbf{K}}, c_{\mathbf{K}})$. Only with these parameters can the point particle perform actual reflections with the reflection walls respectively.

In Section \ref{sec:two_circles_hyp} we analyze the change of the foci-circle and foci-caustic circle when varying the parameters $(a,R)$. 


Lifting both quadratic equations to the projective plane, we will derive in the following section the associated elliptic curve, as describe in \cite{GH}. From this we derive Cayley's conditions which result in conditions on the parameters for $n-$periodic orbits.

In Sections \ref{sec:limits} we start the discussion of limiting cases by first considering the case of a parabolic boundary enclosing $F'$. We derive the associated foci-caustic circle as well a s the admissible range of parameters. Similar to the hyperbolic case we discuss the relative positions of the foci- and foci-caustic circle and derive conditions for low periodic orbits. Section \ref{sec:limits2} concerns the limiting case where one obtains a parabolic boundary with focus at $F$. In this limit the foci-circle degenerates to a straight line. An accordance to the section before the admissible range as well as conditions for low periodic orbits are derived. We close our discussion in Section \ref{sec:limits3} with the last limiting case corresponding to a straight line boundary. Thereby we reproduce and compare results already obtained in \cite{Felder}.


\section{Parameter range for admissible reflections}
\label{sec:parameter_range_hyp}

Not every Kepler orbit intersects with (one of) the hyperbolic boundary arc. Thus, one needs to determine conditions on the admissible values $(a,R)$ in order for physical reflections with the boundaries to occur. This is similar to the discussions in \cite{JZ2} for the elliptic boundary case. We derive the admissible parameter ranges for the two hyperbolic boundaries separately.

\subsection{Parameter range for the branch  ${\cal{B}}_F$ enclosing $F$}

When the reflection wall is given by ${\cal{B}}_F$, we need the apocenter of the elliptic orbit to lie on the opposite side of ${\cal{B}}_F$ from the center of force at $F$.

We distinguish two distinct cases $0\leq R\leq a_{\mathbf{K}}+c_{\mathbf{K}}$ and $R>a_{\mathbf{K}}+c_{\mathbf{K}}$.

\begin{figure}[htb]
\centering
\includegraphics[scale=0.6]{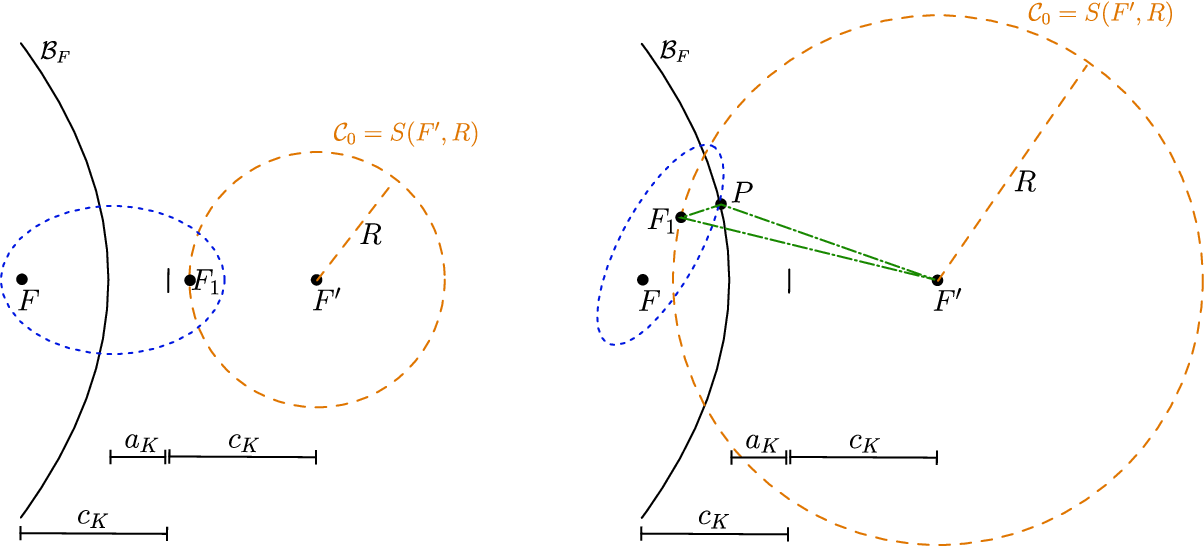}
\caption{Construction of admissible range.}
\label{fig:AdmissRange1}
\end{figure}

\underline{Case 1: $0\leq R \leq a_{\mathbf{K}}+c_{\mathbf{K}}$}

This means the foci-circle intersects $F F'$ at a point, say $F_{1}$. Further let $P$ be the intersection point of ${\cal{B}}_F$ with $FF'$. For this setup we find $|FF_1|>|FP|$. 

Then we need to have  $2 a \ge |FF_1|=2c_{\mathbf{K}}-R$ (see Fig. \ref{fig:AdmissRange1} left) and thus we get
$$a\geq c_{\mathbf{K}}-\frac{R}{2} ~~\Leftrightarrow ~~ R\geq 2(c_{\mathbf{K}}-a).$$

\underline{Case 2: $R > a_{\mathbf{K}}+c_{\mathbf{K}}$}

The parameters are admissible if and only if there is a point $F_{1}$ on $S(F', R)$ such that the associated Kepler ellipse with semi-major axis $a$ intersects ${\cal{B}}_F$ at a point $P$. Then we have

\begin{align*}
hyperbola:~|F'P|-|FP|&=2a_{\mathbf{K}},\\
ellipse:~|F_1P|+|FP|&=2a.
\end{align*}

Adding the both equations, we find
$$|F'P|+|F_1P|=2(a+a_{\mathbf{K}}).$$

Thus, $P$ lies on an ellipse with foci $F'$ and $F_1$ and with semi-major axis $a+a_{\mathbf{K}}$. By the triangle inequality, we find
$$|F'P|+|F_1P|\geq |F'F_1| ~~\Leftrightarrow ~~ 2(a+a_{\mathbf{K}})\geq R.$$

On the contrary, when $2(a+a_{\mathbf{K}})\geq R$, the triangle inequality is satisfied, and we may always find a degenerate Kepler ellipse realizing these parameters.

Combining these two different cases, we obtain the admissible parameter range stated in the following proposition: 
\begin{prop}
The admissible parameter range $(a,R)$ for a hyperbolic boundary arc enclosing $F$ is given by
\begin{align*}
R\leq 2(a+a_{\mathbf{K}}) ~\mbox{and}~ R\geq 2(c_{\mathbf{K}}-a)&~~\mbox{for} ~~\frac{c_{\mathbf{K}}-a_{\mathbf{K}}}{2}\leq a\leq c_\mathbf{K},\\
0\leq R\leq R\leq 2(a+a_{\mathbf{K}}) &~~\mbox{for}~~a>c_\mathbf{K}.
\end{align*}
\end{prop}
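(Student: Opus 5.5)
The plan is to assemble the proposition from the two case analyses just carried out, organized by the value of $R$ relative to $a_{\mathbf{K}}+c_{\mathbf{K}}$, and then to rewrite the resulting region in the $(a,R)$-plane as conditions on $R$ for each fixed $a$. First I will record the two constraints. For $0\le R\le a_{\mathbf{K}}+c_{\mathbf{K}}$ (Case 1), admissibility is equivalent to $R\ge 2(c_{\mathbf{K}}-a)$. For $R>a_{\mathbf{K}}+c_{\mathbf{K}}$ (Case 2), admissibility is equivalent to $R\le 2(a+a_{\mathbf{K}})$, with sufficiency given by the degenerate Kepler ellipse argument. I also need the necessity direction in Case 1 made precise. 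Any Kepler ellipse with focus $F$ and second focus $F_1$ on $S(F',R)$ reaching $\mathcal{B}_F$ must satisfy $2a\ge |FF_1|\ge \min_{F_1\in S(F',R)}|FF_1| = 2c_{\mathbf{K}}-R$. Conversely, when this holds, the choice $F_1$ on segment $FF'$ yields a (possibly degenerate) orbit through the vertex $P$, because $|FP|=c_{\mathbf{K}}-a_{\mathbf{K}}<|FF_1|$.

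Next I will check that in Case 1 the upper condition $R\le 2(a+a_{\mathbf{K}})$ is automatic once $R\ge 2(c_{\mathbf{K}}-a)$. Indeed, $R\ge 2(c_{\mathbf{K}}-a)$ with $R\le a_{\mathbf{K}}+c_{\mathbf{K}}$ forces $a\ge (c_{\mathbf{K}}-a_{\mathbf{K}})/2$. Then $2(a+a_{\mathbf{K}})\ge c_{\mathbf{K}}+a_{\mathbf{K}}\ge R$. Symmetrically, in Case 2 the lower bound $R\ge 2(c_{\mathbf{K}}-a)$ holds automatically whenever $R\le 2(a+a_{\mathbf{K}})$ is nonempty, because $R>a_{\mathbf{K}}+c_{\mathbf{K}}$ and $a\ge (c_{\mathbf{K}}-a_{\mathbf{K}})/2$ give $2(c_{\mathbf{K}}-a)\le c_{\mathbf{K}}+a_{\mathbf{K}}<R$. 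Hence the admissible set is uniformly $\max\{0,2(c_{\mathbf{K}}-a)\}\le R\le 2(a+a_{\mathbf{K}})$.

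Finally I will split according to the sign of $c_{\mathbf{K}}-a$. For $a<(c_{\mathbf{K}}-a_{\mathbf{K}})/2$ the interval is empty, since $2(c_{\mathbf{K}}-a)>2(a+a_{\mathbf{K}})$ exactly then. For $(c_{\mathbf{K}}-a_{\mathbf{K}})/2\le a\le c_{\mathbf{K}}$ both bounds are active, giving the first line of the proposition. For $a>c_{\mathbf{K}}$ the lower bound is negative and is replaced by $R\ge 0$, giving the second line. Here I read the typo ``$0\le R\le R\le$'' as $0\le R\le 2(a+a_{\mathbf{K}})$.

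The main obstacle is the interface between the two cases, namely making sure the derived inequalities are genuinely necessary and sufficient on each side of $R=a_{\mathbf{K}}+c_{\mathbf{K}}$ and that the redundancy claims above hold. The Case 2 sufficiency relies on admitting degenerate (collision) ellipses. If non-degenerate orbits are required, only boundary points of the region change, so I will remark that the region's closure is unaffected.
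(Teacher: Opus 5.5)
Your case split at $R=a_{\mathbf{K}}+c_{\mathbf{K}}$ and your two necessity arguments are the paper's. Your merging step is correct and more explicit than the paper's one-line ``combining''; it consists of the two redundancy checks, the reduction to $\max\{0,2(c_{\mathbf{K}}-a)\}\le R\le 2(a+a_{\mathbf{K}})$, and the split in $a$.

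The gap is in sufficiency. You take it over from the paper as degenerate or vertex-passing orbits, and this fails on a large part of the region.
\begin{itemize}
\item In Case 1, take foci $F$ and $F_1=(-R,0)$. This ellipse passes through the vertex $P=(-c_{\mathbf{K}}-a_{\mathbf{K}},0)$ only when $2a=|FF_1|=2c_{\mathbf{K}}-R$, i.e. only on the boundary line.
\item In Case 2 (and in Case 1 for large $a$), a degenerate orbit with second focus on ${\cal{C}}_0$ needs $|FF_1|=2a$. This is impossible once $2a>2c_{\mathbf{K}}+R$. For example, $(a_{\mathbf{K}},c_{\mathbf{K}},R,a)=(1,2,4,10)$ is admissible, yet $|FF_1|\le 8<20$ for every $F_1\in{\cal{C}}_0$.
\end{itemize}

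The repair is an intermediate-value argument. Fix $F_1\in{\cal{C}}_0$ and set $h(Q)=|FQ|+|F_1Q|$ for $Q\in{\cal{B}}_F$. This function is continuous and tends to $\infty$ along both ends of the branch. So if $h(Q_0)\le 2a$ for a single $Q_0\in{\cal{B}}_F$, then $|FF_1|\le h(Q_0)\le 2a$, the Kepler orbit exists, and it meets ${\cal{B}}_F$.
\begin{itemize}
\item In Case 1, take $F_1=(-R,0)$ and $Q_0=P$, which gives $h(P)=2c_{\mathbf{K}}-R$.
\item In Case 2, $|F'Q|$ ranges over $[c_{\mathbf{K}}+a_{\mathbf{K}},\infty)$ on ${\cal{B}}_F$ and $R>c_{\mathbf{K}}+a_{\mathbf{K}}$, so the foci-circle meets the branch. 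Place $F_1=Q_0$ at such a point; then $h(Q_0)=|FQ_0|=R-2a_{\mathbf{K}}$.
\end{itemize}
Compactly, using $|FQ|=|F'Q|-2a_{\mathbf{K}}$ on ${\cal{B}}_F$, admissibility is equivalent to
$\min_{Q\in{\cal{B}}_F}\bigl(|F'Q|-2a_{\mathbf{K}}+\bigl|\,|F'Q|-R\,\bigr|\bigr)\le 2a$.
This minimum equals $2c_{\mathbf{K}}-R$ in Case 1 and $R-2a_{\mathbf{K}}$ in Case 2, so necessity and sufficiency come out together.

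This also corrects your closing remark. All orbits reaching the wall are degenerate only on the Case 1 edge $R=2(c_{\mathbf{K}}-a)$. On $R=2(a+a_{\mathbf{K}})$ there are non-degenerate ellipses tangent to the wall, so Case 2 does not depend on collision orbits.
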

\label{prop:admiss_B_F}

See Fig. \ref{fig:AdmissRange1} for a graphical representation.

\begin{figure}[htb]
\centering
\includegraphics[scale=0.8]{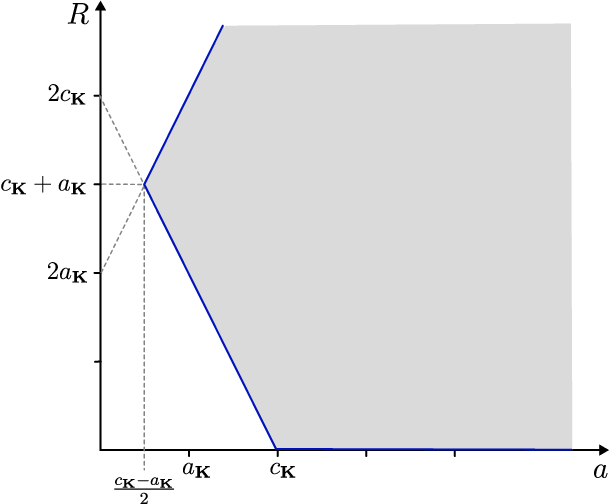}
\caption{Admissible region for hyperbolic arc enclosing $F$.}
\label{fig:AdmissRange1}
\end{figure}

\subsection{Hyperbolic boundary arc enclosing $F'$}
For the hyperbolic arc ${\cal{B}}_{F'}$ enclosing $F'$, the admissible range can be determined in a much simpler way. We consider the minimal degeneracy, where the Kepler orbit degenerates to a straight line segment and the second focus $F_i$ of this degenerate Kepler ``ellipse'' lies on the line $\overline{FF'}$. For $R<c_{\mathbf{K}}-a_{\mathbf{K}}$ the coordinates of $F_i$ are given by $F_i=(2_{\mathbf{K}}-R,0)$,  while we have set the origin $O=F'$. Such a degenerate Kepler orbit exists if and only if  $2a\geq 2c_{\mathbf{K}}-R$. Similarly, for $R\geq c_{\mathbf{K}}-a_{\mathbf{K}}$ the Kepler focus $F_i=(2c_{\mathbf{K}}-R,0)$ now lies to the left of the hyperbolic boundary arc ${\cal{B}}_{F'}$. For the degenerate Kepler rectilinear orbit to intersect ${\cal{B}}_{F'}$, we thus need to have
$$2a\geq c_{\mathbf{K}}+a_{\mathbf{K}}+a_{\mathbf{K}}-(c_{\mathbf{K}}-R)=2a_{\mathbf{K}}+R.$$ 
Combining these results, we find the restrictions on the admissible range:
\begin{align*}
R\geq 2c_{\mathbf{K}}-2a~~~\mbox{for}~~~R<c_{\mathbf{K}}-a_{\mathbf{K}},\\
R\leq 2a-2a_{\mathbf{K}}~~~\mbox{for}~~~R\geq c_{\mathbf{K}}-a_{\mathbf{K}}.
\end{align*}

For a graphical representation of the admissible values $(a,R)$ for this case refer to Fig. \ref{fig:AdmissRange2}.

\begin{figure}[htb]
\centering
\includegraphics[scale=0.8]{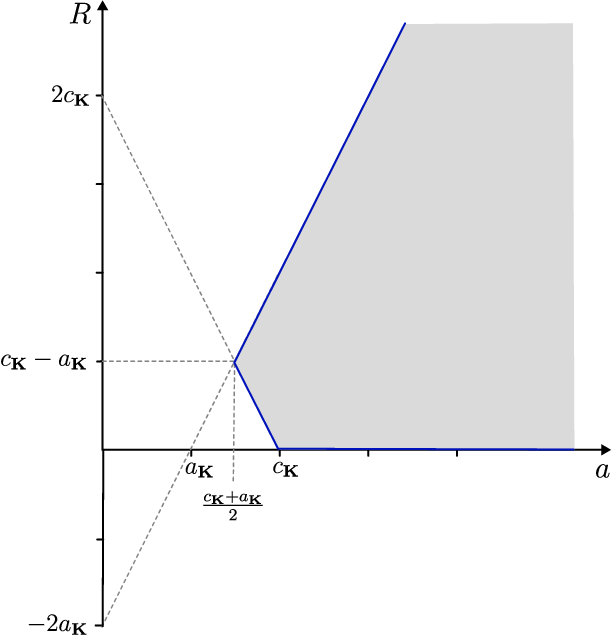}
\caption{Admissible region for hyperbolic arc enclosing $F'$.}
\label{fig:AdmissRange2}
\end{figure}

As expected, compared to the elliptic boundary cases as discussed in \cite{JZ2}, the hyperbolic admissible regions allow for a broader range of values. This is due to the fact, that in contrast to the ellipse, the hyperbolic branches are unbounded. 

\section{The foci-secants and the foci-caustic circle}
\label{sec:foci_caustic}

In \cite{JZ2}, we showed that, in the elliptic boundary case, the chords formed by consecutive second Kepler foci $F_i$ and $F_{i+1}$ are tangent to a common circle, which we called the \textit{foci-caustic circle}. The same is true for the hyperbolic boundary cases, with the minor geometric difference that the secants of consecutive second foci are now tangent to a common circle. This is summarized in the following theorem: 

\begin{theorem}
     In the hyperbolic boundary cases with ${\cal{B}}_{F'}$ or ${\cal{B}}_{F}$ as boundary, the secants formed by two consecutive second foci of the Kepler flight ellipses are tangent to the foci-caustic circle ${\cal{C}}_1$, given by
    $${\cal{C}}_1:(x-x_0)^2+y^2=r_0^2,$$
    where
    $$x_0=\frac{c_{\mathbf{K}}}{2(a_{\mathbf{K}}^2-c_{\mathbf{K}}^2)}\cdot [R^2-4(a\mp a_{\mathbf{K}})^2],$$
    and
    $$r_0=2(a\mp a_{\mathbf{K}})\mp \frac{a_{\mathbf{K}}}{c_{\mathbf{K}}}\cdot x_0.$$

    Here $\mp=-$ for  ${\cal{B}}_{F'}$ and $\mp=+$ for  ${\cal{B}}_{F}$.
    
\end{theorem}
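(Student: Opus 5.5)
The plan is to identify each foci-secant as a radical axis and then check tangency by a distance computation in which everything becomes affine in one coordinate of the reflection point.

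Fix a reflection point $P$ on the boundary branch. The incoming and outgoing Kepler ellipses have the same energy, hence the same semi-major axis $a$, share the focus $F$, and both pass through $P$. The focal property $|PF|+|PF_i|=2a$ then gives
$$|PF_i|=|PF_{i+1}|=\rho:=2a-|PF|.$$
So $F_i$ and $F_{i+1}$ lie on the circle $S(P,\rho)$. They also lie on the foci-circle $S(F',R)$.

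Consequently the secant $F_iF_{i+1}$ is the radical axis of $S(F',R)$ and $S(P,\rho)$. With $F'=O$, this line is
$$2\,X\cdot P = R^2-\rho^2+|P|^2 .$$
Notably, this uses nothing about the reflection law beyond energy conservation and the fact that $F_{i+1}\neq F_i$ is the other intersection point. I would remark that the law of reflection is only needed to pick out which intersection point is $F_{i+1}$, and it does not affect the line.

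Next I would use the hyperbola's focal relations to make everything affine in $x_P$. On the branch we have $|PF'|=|PF|+s$, with $s=2a_{\mathbf{K}}$ on ${\cal{B}}_F$ and $s=-2a_{\mathbf{K}}$ on ${\cal{B}}_{F'}$. Since $|P|=|PF'|$, the quadratic terms $|PF|^2$ cancel in $|P|^2-\rho^2$, leaving
$$(s+2a)\,\bigl(2|PF|+s-2a\bigr),$$
which is linear in $|PF|$. By the focus–directrix property, $|PF|$ (equivalently $|PF'|$) is an affine function of $x_P$, with slope $\pm c_{\mathbf{K}}/a_{\mathbf{K}}$ and sign fixed by the branch. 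Hence the secant takes the form $X\cdot P=\alpha x_P+\beta$, and $|P|=\gamma x_P+\delta$ with explicit constants $\alpha,\beta,\gamma,\delta$.

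The distance from a candidate center $(x_0,0)$ to this line is $|x_0x_P-\alpha x_P-\beta|/|P|$. Tangency to a fixed circle of radius $r_0$ therefore amounts to the polynomial identity in $x_P$
$$x_0x_P-\alpha x_P-\beta=\pm r_0(\gamma x_P+\delta).$$
Matching coefficients gives two linear equations in $(x_0,r_0)$, whose solution should reproduce the stated formulas. The combination $a\mp a_{\mathbf{K}}$ should appear through the factor $s+2a$, and the denominator $a_{\mathbf{K}}^2-c_{\mathbf{K}}^2$ from solving against the slope $c_{\mathbf{K}}/a_{\mathbf{K}}$.

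The main obstacle I expect is sign bookkeeping: choosing the correct branch of $|PF|$ as an affine function of $x_P$, the sign of $s$, and the overall $\pm$, so that a single formula covers both ${\cal{B}}_F$ and ${\cal{B}}_{F'}$. I would also need to check that $r_0\ge 0$ on the admissible region of Section \ref{sec:parameter_range_hyp}, or else interpret $r_0$ as a signed radius. I would do the computation first for ${\cal{B}}_{F'}$ and then obtain ${\cal{B}}_F$ by the substitution $a_{\mathbf{K}}\mapsto -a_{\mathbf{K}}$. Finally, I would verify the result against the limit $a_{\mathbf{K}}\to 0$, which gives the straight-line case of \cite{Felder}.
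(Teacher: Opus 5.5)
Your proposal is correct, and it takes a different route from the paper at the key step. Both arguments identify the secant $F_iF_{i+1}$ as the radical axis of $S(F',R)$ and the circle about $P$ of radius $2a-|PF|$. The paper then parametrizes the branch by $(\pm a_{\mathbf{K}}\cosh t-c_{\mathbf{K}},\, b_{\mathbf{K}}\sinh t)$ and computes the envelope of the family $K(x,y,t)=0$: it solves $\partial_t K=0$ for $\tanh t$ and eliminates $t$, a computation it does not write out.

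You instead verify the stated circle directly. Since $|P|=|PF'|$ is affine in $x_P$ on each branch and $|P|^2-\rho^2=(s+2a)(2|PF|+s-2a)$, the distance from $(x_0,0)$ to the secant is a ratio of affine functions, and tangency reduces to two linear equations. This closes as you expect. On ${\cal{B}}_{F'}$ one has $|P|=\frac{c_{\mathbf{K}}}{a_{\mathbf{K}}}x_P+\frac{b_{\mathbf{K}}^2}{a_{\mathbf{K}}}$. With $A=a-a_{\mathbf{K}}$, the secant is $xx_P+yy_P=\frac{R^2-4A^2}{2}+2A|P|$. After writing $x_P$ in terms of $|P|$, the constant term of the numerator vanishes exactly when $x_0=-\frac{c_{\mathbf{K}}}{2b_{\mathbf{K}}^2}(R^2-4A^2)$. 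The distance is then $\bigl|2A-\frac{a_{\mathbf{K}}}{c_{\mathbf{K}}}x_0\bigr|$, which gives the stated formulas. The substitution $a_{\mathbf{K}}\mapsto -a_{\mathbf{K}}$ does carry every ingredient over to ${\cal{B}}_F$.

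What your route buys: a short, fully explicit computation that proves every secant is tangent (you only need the ``if'' direction of the coefficient matching). It treats both branches at once. It also shows why a circle appears: the focus--directrix property makes both the offset of the line and the length of its normal affine in $x_P$. What the paper's envelope computation buys: it produces the caustic without presupposing its shape or that its centre lies on the axis. Finally, the sign of $r_0$ needs no separate check, since the paper works with $S(x_0,|r_0|)$ anyway.
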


For convenience of the reader, we include a direct proof.

\begin{proof}
    Recall that $F=(-2 c_{\mathbf{K}},0)$ and $F'=(0,0)$. 
    
    Write $b_{\mathbf{K}}=\sqrt{c_{\mathbf{K}}^2-a_{\mathbf{K}}^2}$. Let $P=(x_{P},y_{P})$ be a point of reflection along the reflection boundary, which is a branch of the hyperbola
    $${\cal{B}}: =\frac{(x_P+c_{\mathbf{K}})^2}{a_{\mathbf{K}}^2}-\frac{y_P^2}{b_{\mathbf{K}}^{2}}=1.$$
   We parametrize  ${\cal{B}}_{F}$ and ${\cal{B}}_{F'}$ respectively as 
    $$(x_P,y_P)=(\pm a_{\mathbf{K}}\cdot \cosh(t)-c_{\mathbf{K}},b_{\mathbf{K}}\cdot \sinh(t)),$$
    with $+$ for ${\cal{B}}_{F'}$ and $-$ for ${\cal{B}}_F$.
    Depending on whether $P$ lies on ${\cal{B}}_{F'}$ or ${\cal{B}}_{F}$, we have
    $$|FP|=|PF'| \pm 2 a_{\mathbf{K}}.$$

    The second foci $F_i$ and $F_{i+1}$ of consecutive Kepler flight ellipses are the intersection points of the foci-circle 
    $${\cal{C}}_1:x^2+y^2=R^2$$
    with the other circle 
    $${\cal{C}}_2:(x-x_P)^2+(y-y_P)^2=(2a-|P F|)^2=(2a \mp 2\cdot a_{\mathbf{K}}+|PF'|)^2$$
    obtained as the follows: At the point of reflection $P=(x_P,y_P)$ the two consecutive Kepler flight ellipses satisfy
   $$|FP|+|F_iP|=2a=|FP|+|F_{i+1}P| ~~\Leftrightarrow ~~ |F_iP|=|F_{i+1}P|.$$
   This means that $F_i$ and $F_{i+1}$ lie along the same circle ${\cal{C}}_2$ with midpoint $P$ and radius $r=|F_iP|=2a-|FP|$ as stated before.
    
    A straight forward calculation shows that 
    $$|PF'|=\sqrt{x_P^2+y_P^2}=c_{\mathbf{K}}\cosh(t) \pm a_{\mathbf{K}},$$
and thus 
     $${\cal{C}}_2:(x-x_P)^2+(y-y_P)^2=(2a \mp a_{\mathbf{K}}+c_{\mathbf{K}}\cosh(t))^2.$$

  The equation of the line $F_{1} F_{2}$ is obtained by subtracting the quadratic terms from the equations of ${\cal{C}}_1$ and ${\cal{C}}_2$. This is given by
\begin{align*}K(x, y, t)&:= 2 (\mp a_{\mathbf{K}} \cosh t-c_{\mathbf{K}}) x + 2  \sinh t \, b_{\mathbf{K}}\, y + 4 a^{2}\\
&~~~ + 4 (a \mp a_{\mathbf{K}}) c_{\mathbf{K}} \cosh t  \mp 4 a a_{\mathbf{K}}-R^{2}=0.
\end{align*}

The envelope curve of this family of lines, parametrized by $t$, is obtained by eliminating the parameter $t$ from the system of equations 
$$K(x,y,t)=0 ~~\hbox{  and }~~\partial_t K(x,y,t)=0.$$

    Solving the second equation we find:
    $$\tanh (t) = \dfrac{ b_{\mathbf{K}} y}{2 (a \mp a_{\mathbf{K}}) c_{\mathbf{K}} \mp a_{\mathbf{K}} x}.$$
    
Using this to eliminate $t$ from the above system of equations, we obtain the result through a lengthy but straightforward computation using identities for hyperbolic functions.

\end{proof}

For a graphical representation, see Figure \ref{fig:foci_caustic}.

\begin{figure}[htb]
\centering
\includegraphics[scale=0.8]{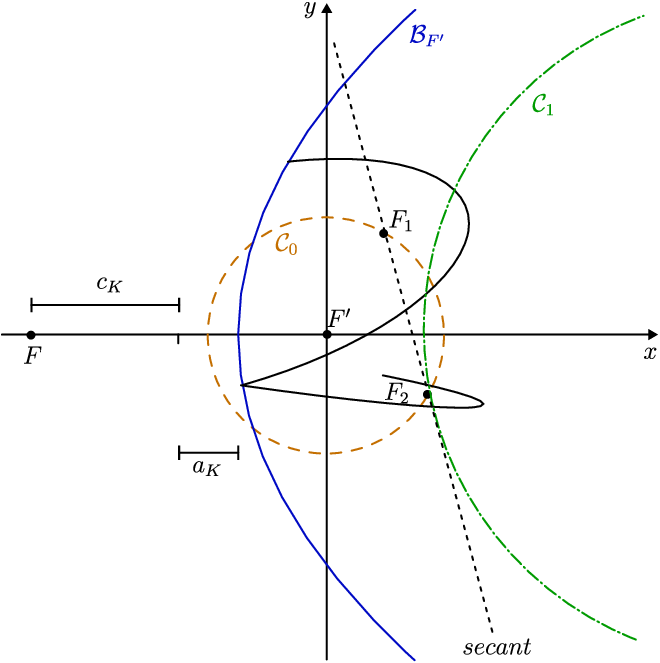}
\caption{Foci-circle and foci-caustic circle for ${\cal{B}}_{F'}$.}
\label{fig:foci_caustic}
\end{figure}

For comparison, the formula of the foci-caustic circle for the right hyperbolic arc ${\cal{B}}_{F'}$ is identical to the expression obtained for an elliptic boundary as derived in \cite{JZ2}. The only difference is the relative values of $a_{\mathbf{K}}$ and $c_{\mathbf{K}}$ (elliptic $a_{\mathbf{K}}>c_{\mathbf{K}}$; hyperbolic $a_{\mathbf{K}}<c_{\mathbf{K}}$).

\section{Relative positions of the foci- and foci-caustic circles}
\label{sec:two_circles_hyp}
Here we discuss the relative positions of the foci-circle ${\cal{C}}_0=S(F', R)$ as well as the foci-caustic circle ${\cal{C}}_1=S(x_{0}, |r_{0}|)$ for the case of reflections along a hyperbolic arc enclosing $F$, where we choose the origin of the coordinate system to lie at $F'$. These positions essentially determine the dynamics.

As seen in the last section, there is an associated parameter range for admissible reflections. We will fix an admissible $a$ and vary $R$ in the admissible range to analyze different scenarios for the two circles. 

We discuss the cases of ${\cal{B}}_{F}$ and ${\cal{B}}_{F'}$ separately. We illustrate the calculations for ${\cal{B}}_F$ in detail, whereas for ${\cal{B}}_{F'}$ we simply state the results, since the calculations are largely identical to those in the former case.

\subsection{The case of ${\cal{B}}_{F}$: Hyperbolic arc enclosing $F$}
\label{subsec:BF circles}
When considering the relative position of the two circles ${\cal{C}}_0=S(F',R)$ and ${\cal{C}}_1=S(x_0,|r_0|)$, where $F'=O$, two different initial configurations can occur in general: either the two circles are disjoint (Case 1), or one circle lies inside the other (Case 2). 

We start by discussing the first case. Define
$$d_1:=x_0-r_0-R~~~\mbox{for}~~~r_0\leq x_0,$$
which measures the distance between the two circles. From the restriction $r_0\leq x_0$ it follows $a\geq c_K$.
We can distinguish the following three following cases: 

(I) if $d_1>0$, there is no intersection of ${\cal{C}}_0$ and ${\cal{C}}_1$;

 (II) if $d_1=0$, the two circles ${\cal{C}}_0$ and ${\cal{C}}_1$ touch in one point;
 
 (III) if $d_1<0$, the two circles ${\cal{C}}_0$ and ${\cal{C}}_1$ intersect in two points, or become identical.

We study the three cases with $a\geq c_K$  in details. First, by substituting the corresponding expressions, we find:
$$d_1(R)=-\frac{1}{2(a_{\mathbf{K}}+c_{\mathbf{K}})}\cdot R^2+\frac{4(a+a_{\mathbf{K}})^2}{2(a_{\mathbf{K}}+c_{\mathbf{K}})}-2(a+a_{\mathbf{K}})-R,$$
which is a quadratic expression in $R$, from which on can easily determine the zeros of $d_1$:
$$d_1=0~~\Leftrightarrow ~~ R_\pm =\frac{-2(a_{\mathbf{K}}+c_{\mathbf{K}})\pm \sqrt{(2c_{\mathbf{K}}-2a_{\mathbf{K}}-4a)^2}}{2}.$$
With the restriction $a\geq c_{\mathbf{K}}>a_{\mathbf{K}}$ one obtains:
$$R_\pm =\begin{cases} 2(a-c_{\mathbf{K}})>0~~\mbox{for}~``+''\\
-2(a+a_{\mathbf{K}})<0~~\mbox{for}~``-''.
\end{cases}
$$
Note that only the positive root is of our interest for further considerations.

From the admissible parameter range, see Proposition \ref{prop:admiss_B_F}, thus we 
find:
\begin{enumerate}
\item for $0\leq R<2(a-c_{\mathbf{K}})$ there is no intersection between ${\cal{C}}_0$ and ${\cal{C}}_1$\rd{;}
\item for $R=2(a-c_{\mathbf{K}})$ the two circles ${\cal{C}}_0$ and ${\cal{C}}_1$ are tangent at one point,
\item for $2(a-c_{\mathbf{K}})< R<2(a+a_{\mathbf{K}})$ the circles ${\cal{C}}_0$ and ${\cal{C}}_1$ intersect in two points,
\item for $R=2(a+a_{\mathbf{K}})$ the two circles ${\cal{C}}_0$ and ${\cal{C}}_1$ are identical.
\end{enumerate}

Considering the second possible starting case, where ${\cal{C}}_0$ lies within ${\cal{C}}_1$. We define
$$d_2:=r_0-x_0-R~~~\mbox{for}~~~r_0>x_0,$$
which measures the minimal distance of these two circles. 

From $r_0>x_0$ we get
$$\frac{c_{\mathbf{K}}-a_{\mathbf{K}}}{2}\leq a<c_{\mathbf{K}}.$$
Similar to the previous calculation, we consider $d_2(R)$ as a function of $R$ and determine its zeros. We get
$$R_\pm =\begin{cases} 2(a+a_{\mathbf{K}})>0~~\mbox{for}~+\\
2(c_{\mathbf{K}}-a)<0~~\mbox{for}~- \rd{.}
\end{cases}
$$

With the associated admissible parameter range, in particular $\frac{c_{\mathbf{K}}-a_{\mathbf{K}}}{2}\leq a<c_{\mathbf{K}}$,  we now find only three different cases:
\begin{enumerate}
\item for $R=2(c_{\mathbf{K}}-a)$ the two circles ${\cal{C}}_0$ and ${\cal{C}}_1$ are tangent at one point with ${\cal{C}}_0$ inside ${\cal{C}}_1$,
\item for $2(c_{\mathbf{K}}-a)< R<2(a+a_{\mathbf{K}})$ the circles ${\cal{C}}_0$ and ${\cal{C}}_1$ intersect in two points,
\item for $R=2(a+a_{\mathbf{K}})$ the two circles ${\cal{C}}_0$ and ${\cal{C}}_1$ are identical. 
\end{enumerate}







 



\subsection{Hyperbolic arc enclosing $F'$}
For the setup of the hyperbolic boundary arc enclosing $F'$, exactly the same considerations can be used to analyze the relative positions of the two circles ${\cal{C}}_0$ and ${\cal{C}}_1$. We simply state our results.

For the admissible range $a\geq c_{\mathbf{K}}$, we find:
\begin{enumerate}
\item for $0\leq R<2(a-c_{\mathbf{K}})$ there is no intersection of ${\cal{C}}_0$ and ${\cal{C}}_1$ and the two circles lie outside of each other,
\item for $R=2(a-c_{\mathbf{K}})$ the two circles ${\cal{C}}_0$ and ${\cal{C}}_1$ are tangent at one point,
\item for $2(a-c_{\mathbf{K}})< R<2(a-a_{\mathbf{K}})$ the circles ${\cal{C}}_0$ and ${\cal{C}}_1$ intersect in two points,
\item for $R=2(a-a_{\mathbf{K}})$ the two circles ${\cal{C}}_0$ and ${\cal{C}}_1$ are identical.
\end{enumerate}

Similar, for the part of the admissible range where $\frac{c_{\mathbf{K}}+a_{\mathbf{K}}}{2}\leq a<c_{\mathbf{K}}$, we find:
\begin{enumerate}
\item for $R=2(c_{\mathbf{K}}-a)$ the two circles ${\cal{C}}_0$ and ${\cal{C}}_1$ are tangent at one point with ${\cal{C}}_0$ inside ${\cal{C}}_1$,
\item for $2(c_{\mathbf{K}}-a)< R<2(a-a_{\mathbf{K}})$ the circles ${\cal{C}}_0$ and ${\cal{C}}_1$ intersect in two points,
\item for $R=2(a-a_{\mathbf{K}})$ the two circles ${\cal{C}}_0$ and ${\cal{C}}_1$ are identical.
\end{enumerate}








 



\subsection{Focal reflection property}
\label{subsec:focal_reflection_property}

Considering the relative positions of the foci-circle and foci-caustic circle from the last section, there is one special case of interest, namely when the two circles meet at a single point. This occurs when $R=2(a-c_{\mathbf{K}})$ for ${\cal{B}}_F$ and ${\cal{B}}_{F'}$, respectively. This case is special because it leads to a useful geometric property, the focal reflection property, as obtained previously in related systems \cite{GR, JZ}. 

\begin{theorem}
For $R=2(a-c_{\mathbf{K}})$ all consecutive Kepler ellipses pass through the second focus $F'$. Furthermore the trajectories asymptotically approach the x-axis $\{y=0\}$. 
\end{theorem}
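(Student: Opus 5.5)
The first claim will follow directly from the focal definition of an ellipse, with no computation. Each Kepler flight ellipse has foci $F$ and $F_i$ and semi-major axis $a$, and $F_i$ lies on ${\cal{C}}_0=S(F',R)$. A point $X$ lies on this ellipse iff $|FX|+|F_iX|=2a$. For $X=F'$ we have $|FF'|=2c_{\mathbf{K}}$ and $|F'F_i|=R$. So $F'$ lies on every flight ellipse iff $2c_{\mathbf{K}}+R=2a$, i.e. iff $R=2(a-c_{\mathbf{K}})$. Because $a$ and $R$ are conserved along the billiard orbit, this holds for every consecutive ellipse, on either branch ${\cal{B}}_F$ or ${\cal{B}}_{F'}$. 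I will also note the degenerate case where $F_i$ lies on the $x$-axis to the right of $F'$: the ellipse then has all three points $F,F',F_i$ on its major axis, so it is symmetric about $\{y=0\}$.

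For the asymptotic statement I will pass to the bicircular Poncelet picture of Section~\ref{sec:two_circles_hyp}. By the case analysis there, $R=2(a-c_{\mathbf{K}})$ is exactly the value where ${\cal{C}}_0$ and ${\cal{C}}_1$ are tangent ($d_1=0$). The first step is to locate the tangency point. Since both centres $O$ and $(x_0,0)$ lie on the $x$-axis, the point is $T=(\pm R,0)$. Substituting $R=2(a-c_{\mathbf{K}})$ into the formula for $x_0$ from the foci-caustic theorem gives, for ${\cal{B}}_F$,
\[
x_0=\frac{2c_{\mathbf{K}}(2a-c_{\mathbf{K}}+a_{\mathbf{K}})}{c_{\mathbf{K}}-a_{\mathbf{K}}}>R .
\]
So ${\cal{C}}_1$ touches ${\cal{C}}_0$ externally at $T=(R,0)$, and analogously for ${\cal{B}}_{F'}$.

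The second step is a monotonicity argument for the second foci. By the foci-caustic theorem, the map $F_i\mapsto F_{i+1}$ on ${\cal{C}}_0$ is the Poncelet correspondence: $F_{i+1}$ is the second intersection with ${\cal{C}}_0$ of the other tangent from $F_i$ to ${\cal{C}}_1$. For two circles tangent at $T$, this correspondence has $T$ as its only fixed point, a parabolic one. I will argue that on each arc of ${\cal{C}}_0\setminus\{T\}$ the iterates move monotonically in one direction, because each tangent line separates ${\cal{C}}_1$ from the part of ${\cal{C}}_0$ already passed. A bounded monotone sequence converges, and its limit must be a fixed point, so $F_i\to T$ as $i\to\pm\infty$. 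Equivalently, on the elliptic curve the tangency makes the discriminant vanish, so the curve degenerates to a nodal cubic. The translation then becomes a translation on the additive group, which drives every orbit to the node.

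Finally I will transfer this limit to the trajectories. Since $F_i\to T=(R,0)$, the flight ellipses converge to the ellipse with foci $F$ and $T$, semi-major axis $a$, passing through $F'$, which is symmetric about the $x$-axis. I will check that the arc of the limiting ellipse actually traversed between reflections lies on the $x$-axis: the reflection points $P_i$, obtained as intersections of the ellipses with the boundary branch, converge to the vertex of the branch on $\{y=0\}$. So the trajectories collapse onto the $x$-axis.

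I expect the main obstacle to be this last step. One must show that the reflection points, not only the foci, converge to the axis, so that the relevant arcs flatten onto $\{y=0\}$ rather than onto a full symmetric ellipse. I would first try the focal reflection property: every flight passes through $F'$, which is a focus of the hyperbola. By the reflection law, a ray through one focus reflects into a ray along the line through the other focus $F$. This forces a contraction of the angle to the axis at each reflection, and that contraction can then be matched with the monotone convergence $F_i\to T$.
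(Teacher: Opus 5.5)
Your proof of the first claim is the paper's argument. Your location of the tangency point $T=(R,0)$ (external tangency, $x_0=R+r_0$, for both branches) is also correct.

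\textbf{The convergence step.} The gap is here: $T$ is not a parabolic fixed point. Near $T$ write ${\cal{C}}_0: x\approx R-y^2/(2R)$ and ${\cal{C}}_1: x\approx R+y^2/(2r_0)$. The tangent to ${\cal{C}}_1$ at height $u$ meets ${\cal{C}}_0$ at heights $y$ with $y^2+2\rho uy-\rho u^2=0$, $\rho=R/r_0$, so the two roots have product $-\rho u^2<0$. Every foci-secant near $T$ therefore has its endpoints on opposite sides of the $x$-axis. To leading order the Poncelet map is $y\mapsto-\lambda y$ on one branch and $y\mapsto-\lambda^{-1}y$ on the other, with $\lambda=(\sqrt{x_0}+\sqrt{R})/(\sqrt{x_0}-\sqrt{R})>1$. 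So $T$ is a hyperbolic fixed point of flip type, and consecutive foci near $T$ jump across it. Your claim that the iterates move monotonically on ${\cal{C}}_0\setminus\{T\}$ fails, and the bounded-monotone-sequence argument does not apply.

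The elliptic-curve version has the same problem. At tangency the quadratic factor of $\det(tQ_0+Q_1)$ becomes $(Rt-r_0)^2$, a double root at $t=r_0/R\neq-1$. The curve is nodal, and its smooth part is the multiplicative group, not the additive one (that is the cuspidal case). Over $\mathbb{R}$ a node can be split or non-split:
\begin{itemize}
\item split: the real smooth part is $\mathbb{R}^*$, the translation is $z\mapsto\mu z$ with $|\mu|\neq1$, and every orbit tends to the node;
\item non-split: the real smooth part is a circle, the translation is a rotation, and there is no convergence.
\end{itemize}
So $\Delta=0$ alone proves nothing. It is the local computation above that shows the node is split, i.e.\ that the real incidence curve is a figure eight.

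Your monotonicity idea can be repaired on that figure eight. $E(\mathbb{R})\setminus\{\mathrm{node}\}$ consists of two sheets, each mapped homeomorphically onto the arc ${\cal{C}}_0\setminus\{T\}$. The Poncelet map $f$ swaps them, while $f^2$ preserves each sheet and has no fixed point there: a fixed point would force the two tangents to ${\cal{C}}_1$ from some point $\neq T$ to coincide. A fixed-point-free homeomorphism of an open interval moves every orbit monotonically from one end to the other. Hence $F_{2i}$ and $F_{2i+1}$ are separately monotone along ${\cal{C}}_0\setminus\{T\}$, and both tend to $T$ as $i\to\pm\infty$.

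\textbf{The last step.} It rests on an oversight, and correcting it makes your ``main obstacle'' disappear. Since $|FT|=2c_{\mathbf{K}}+R=2a$, the ellipse with foci $F,T$ and semi-major axis $a$ has semi-minor axis $\sqrt{a^2-|FT|^2/4}=0$. It is the segment $[F,T]\subset\{y=0\}$, the rectilinear orbit meeting the branch orthogonally at its vertex, not a full ellipse symmetric about the axis. This is exactly the paper's remark that the limiting Kepler ellipse degenerates into a line segment, and it also applies to your remark on the case $F_i=(R,0)$. Once $F_i\to T$, the flight ellipses (foci $F$, $F_i$, fixed $a$) converge in the Hausdorff sense to this segment. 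The trajectories therefore approach the $x$-axis without any control of the reflection points.

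The tool you proposed for that step would not work anyway. The flight arcs are Kepler ellipses, so the velocity at a reflection point $P$ is not directed along $PF'$, even though the ellipse passes through $F'$. The optical property of the hyperbola does not act on it directly.

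For comparison, the paper justifies the asymptotic statement only by inspecting the picture of successive secants between the tangent circles. A corrected version of your plan (split node, or $f^2$ on each sheet, plus the degenerate limiting ellipse) would make this rigorous. It would also show that the convergence is exponential, with the foci alternating sides of the axis, as for focal orbits approaching the major axis in an elliptic billiard.
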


\begin{proof}
We know that all second orbital foci $F_i$ lie on the foci-circle ${\cal{C}}_0$, which has radius $R$. Any elliptic Kepler orbit passing through $F'$ satisfies
$$|FF'|+|F'F_i|=2a~~\Leftrightarrow ~~ 2c_{\mathbf{K}}+R=2a.$$
Thus, for $R=2(a-c_{\mathbf{K}})$, all consecutive Kepler ellipses pass through the second focus $F'$.

For this parameter value, the foci-circle and the foci-caustic circle are tangent to each other. Drawing the consecutive foci-secants $s_{i,i+1}$, $s_{i+1,i+2}$, and so on (compare Figure \ref{fig:focal_reflection}), we see that the second orbital foci, and thus the major axes of the Kepler ellipses themselves, tend to the x-axis, with the limiting Kepler ellipse degenerating into a line segment.

\end{proof}

\begin{figure}[htb]
\centering
\includegraphics[scale=1.1]{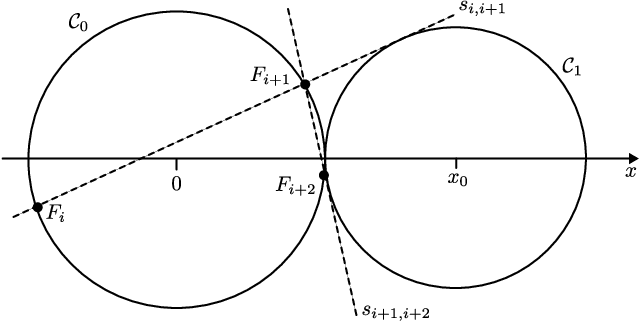}
\caption{Construction of consecutive second foci $F_i$ through secants between the two circles.}
\label{fig:focal_reflection}
\end{figure}

\section{Associated elliptic curve and Cayley-condition}
\subsection{The associated elliptic curve}
\label{Subsection: The associated elliptic curve}

As in \cite{JZ2}, with the help of the foci-circle and the foci-caustic circle, we reduce the dynamics of the Kepler billiards to a standard situation described by the Poncelet porism. We can now directly follow the classical work of Griffiths-Harris \cite{GH}. In the projective space $\mathbb{CP}^2$, the homogenized complexifications of these circles are given respectively by
\begin{align*}
{\cal{C}}_0&:~x^2+y^2-R^2z^2=0,\\
{\cal{C}}_1&:~x^2-2x_0xz+x_0^2z^2+y^2-r_0^2z^2=0.
\end{align*}

These curves are associated respectively to the symmetric matrices
\begin{align*}
Q_0[x,y,z]&\mapsto Q_0=\begin{pmatrix}
1 & 0 & 0\\
0 & 1 & 0\\
0 & 0 &-R^2
\end{pmatrix},\\
Q_1[x,y,z]&\mapsto Q_1=\begin{pmatrix}
1 & 0 & -x_0\\
0 & 1 & 0\\
-x_0 & 0 &x_0^2-r_0^2
\end{pmatrix}.
\end{align*}
The curves are singular if and only if their associated symmetric matrices are singular. This occurs precisely when $R=0$ for ${\cal{C}}_0$ or $r_0=0$ for ${\cal{C}}_1$. We exclude these two special cases from the further analysis and return to them later by considering special low-periodic orbits.
Their dual conics, ${\cal{C}}_0^*$ and ${\cal{C}}_1^*$, consisting of their respective tangent lines, are likewise non-singular in the dual projective space $\mathbb{CP}^{2*}$. Their associated symmetric matrices are $Q_0^{-1}$ and $Q_1^{-1}$, respectively.

We define the incidence variety

$$
E=\{(p,\xi)\in {\cal{C}}_1\times {\cal{C}}_0^*\mid p\in\xi\}.
$$

If ${\cal{C}}_0$ and ${\cal{C}}_1$ intersect transversely, then $E$ is a smooth algebraic curve of genus one in $\mathbb{CP}^2\times\mathbb{CP}^{2*}$. After choosing a suitable base point as the identity element, $E$ acquires the structure of an elliptic curve.

A convenient description of this elliptic curve, introduced in \cite{GH2}, also leads to Cayley's criterion for the existence of $n$-periodic trajectories. One considers the elliptic curve arising from the pencil of quadrics generated by ${\cal{C}}_0$ and ${\cal{C}}_1$. This curve is birationally equivalent to $E$ and is given by

$$
{\cal{D}}:~ y^2=-\det(t\cdot Q_0+Q_1),
$$
with the point at infinity chosen as the identity element.

The elliptic curve is then birationally equivalent to
$${\cal{D}}:y^2=-(t+1)\cdot (R^2t^2-(x_0^2-r_0^2-R^2)t+r_0^2).$$

The shift on this elliptic curve is given by integrating the invariant differential form from infinity to a point above 0. Since this leads to the same formula for the shift as already highlighted in \cite{JZ2}, we, at this point, refer to the former study.

Although the four possible choices of signs for $Q_0$ and $Q_1$ define equivalent complex pencils, the choice of signs is relevant when considering the real structure. Indeed, changing the overall sign of the determinant replaces the equation $y^2=F(t)$ by $y^2=-F(t)$, and the corresponding complex isomorphism is given by $y\mapsto iy$. This transformation is not defined over the real numbers and therefore does not preserve the standard real locus. Instead, it changes the antiholomorphic involution whose fixed-point set represents the real part of the incidence curve.
For the choice of signs used above, namely ${{\cal{D}}:~ y^2=-\det(t\cdot Q_0+Q_1)}$, the birational transformation constructed by Griffiths and Harris is compatible with complex conjugation. Hence it restricts to a birational transformation over the real numbers and identifies the real part of the incidence curve $E$ with the real locus ${\cal{D}}(\mathbb{R})$. For the other equivalent sign choices, the induced real structure is given by a different antiholomorphic involution, and its fixed-point locus does not in general coincide with the standard real locus of the corresponding affine equation. Thus, for the subsequent analysis of the real components, the chosen sign convention is essential.




\subsection{Topology of the real part of the associated elliptic curve} 
We now analyze the real parts of the elliptic curves on which the physical dynamics is linearized. 

For this the elliptic curve alternatively can be written in the form
$$\mathcal{D}: y^2=-R^2(t+1)(t+t_1)(t+t_2),$$
where
\begin{align*}
t_1&=\frac{R^2+r_0^2-x_0^2}{2R^2}+\frac{1}{2}\sqrt{\left(\frac{R^2+r_0^2-x_0^2}{R^2}\right)^2-4\frac{r_0^2}{R^2}},\\
t_2&=\frac{R^2+r_0^2-x_0^2}{2R^2}-\frac{1}{2}\sqrt{\left(\frac{R^2+r_0^2-x_0^2}{R^2}\right)^2-4\frac{r_0^2}{R^2}}.
\end{align*}
The sign of the normalized discriminant  of the quadratic equation
$$ (t+t_{1}) (t+t_{2})=0$$
is
$$\Delta:=(R^2+r_{0}^{2}-x_{0}^{2})^{2}- 4 r_{0}^{2} R^{2},$$
which determines the number of ramification points on the real part of $\mathcal{D}$. A short computation leads to the factorization
$$\Delta=(x_{0}+r_{0}-R) (x_{0}-r_{0}+R) (x_{0}-r_{0}-R)  (x_{0}+r_{0}+R). $$
The discriminant $\Delta$ can take three different signs depending on the relative distances of the foci-circle ${\cal{C}}_0$ and the foci-caustic circle ${\cal{C}}_1$:
\begin{enumerate}
\item $\Delta <0$ if ${\cal{C}}_0$ and ${\cal{C}}_1$ intersect in two points. In these cases two of the three finite ramification points of the elliptic curve ${\cal{D}}$ are nor real. The real part of ${\cal{D}}$ thus is topological equivalent to a circle.
\item $\Delta =0$ if ${\cal{C}}_0$ and ${\cal{C}}_1$ touch in one point or are identical. In these cases ${\cal{D}}$ is singular.
\item $\Delta >0$ if ${\cal{C}}_0$ and ${\cal{C}}_1$ do not intersect. In these cases the elliptic curve is regular and has the topology of the union of two circles.
\end{enumerate}

\subsection{Caley's condition on $n$-periodicity}
\label{sec:Cayley}
Explicit conditions for Poncelet $n$-gons were found by Cayley and reviewed in modern terms by Griffiths-Harris \cite{GH}. These conditions can be applied directly in our setting.

\vspace*{0,5cm}
\noindent
\textit{Expanding the function at $t=0$:
$$\sqrt{-\det(t\cdot Q_0+Q_1)}\dot{=}A_0+A_1\cdot t+A_2\cdot t^2+\dots.$$
For $n\geq 3$, the condition on $n-$periodic orbits is given by
\begin{align*}
\det\begin{pmatrix}
A_2 & A_3 & \dots & A_{m+1}\\
A_3 & A_4 & \dots & A_{m+2}\\
\vdots & & & \vdots\\
A_{m+1} & A_{m+2} & \dots & A_{2m}
\end{pmatrix}&=0~~~\mbox{for odd}~n=2m+1,\\
&\\
\det\begin{pmatrix}
A_3 & A_4 & \dots & A_{m+1}\\
A_4 & A_5 & \dots & A_{m+2}\\
\vdots & & & \vdots\\
A_{m+1} & A_{m+2} & \dots & A_{2m-1}
\end{pmatrix}&=0~~~\mbox{for even}~n=2m.
\end{align*}
}

The first few coefficients read:
\begin{align*}
A_0&=\sqrt{r_0^2},\\
A_1&=\frac{2r_0^2+R^2-x_0^2}{2\sqrt{r_0^2}},\\
A_2&=\frac{\sqrt{r_0^2}\cdot (4r_0^2R^2-R^4+2R^2x_0^2-x_0^4)}{8r_0^4},\\
&\vdots
\end{align*}

Similar to \cite{JZ2}, we can also determine $A_i$ iteratively. Let
$$C_{0}=r_{0}^{2}, \, C_{1}=( 2 r_{0}^{2}+R^{2}-x_{0}^{2}), \, C_{2}=2 R^{2}-x_{0}^{2}+r_{0}^{2}, \, C_{3}=R^{2},$$

then
\[
\begin{aligned}
& A_{0}^{2}=C_{0}, \\ 
& 2 A_{0} A_{1}=C_{1}, \\
& 2 A_{0} A_{2} + A_{1}^{2}=C_{2}, \\
& 2 A_{0} A_{3}+2 A_{1} A_{2}=C_{3}, \\
& 2 A_{0} A_{4}+2 A_{1} A_{3}+A_{2}^{2}=0, \\
& 2 A_{0} A_{5}+2 A_{1} A_{4}+2 A_{2} A_{3}=0, \\
& 2 A_{0} A_{6}+2 A_{1} A_{5}+2 A_{2} A_{4}+A_{3}^{2}=0, \\
& \cdots \cdots \cdots
\end{aligned}
\]


Note that Cayley's condition does not include 1- and 2-periodic orbits, for which a separate analysis is needed. In our case, these are most often confocal conic sections.

As an example, the conditions for period 3 and 4, as derived in \cite{JZ2, DR2}, read as follows. 

\textbf{$n=3$}: 
$$A_2=0 ~\Leftrightarrow ~(2Rr_0+x_0^2-R^2)\cdot (2Rr_0-x_0^2+R^2)=0.$$

\textbf{$n=4$}:
$$A_3=0~\Leftrightarrow ~ (R^2-r_0^2)\cdot (2r_0^2(R^2+x_0^2)-(R^2-x_0^2)^2)=0.$$

\subsection{Examples for low periodic orbits}
\textbf{Case} $n=2$:

Since the case of two-periodic orbits is not governed by Cayley's conditions, we need to analyze it in detail. There is an infinite family of two-periodic orbits corresponding to $R=0$ for both hyperbolic arc boundaries. In this case the Kepler orbits correspond to confocal ellipses with foci $F$ and $F'$. This is to be expected, due to the well-known fact that confocal ellipses and hyperbolas are orthogonal to each other.

\begin{figure}[!htb]
\centering
\includegraphics[scale=0.73]{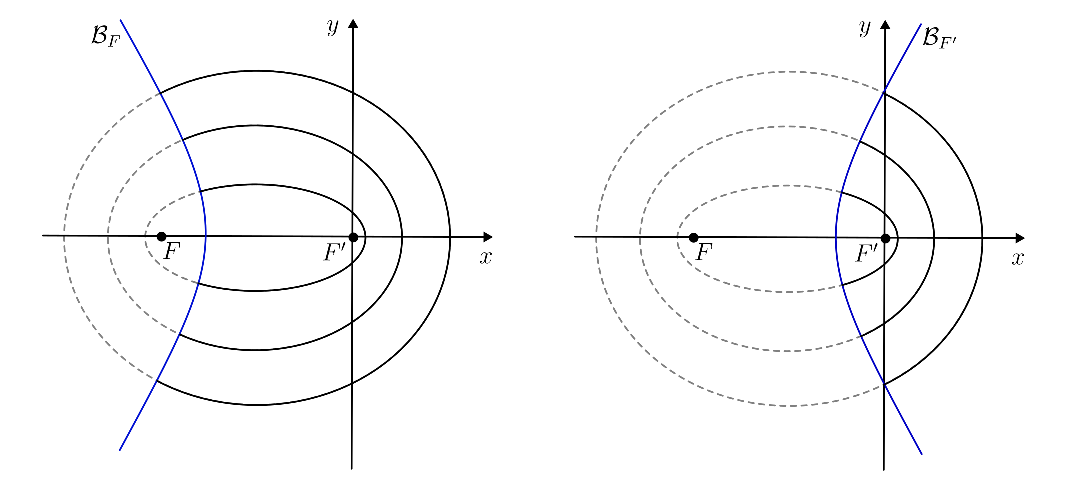}
\caption{Two periodic orbits for both boundaries corresponding to families of confocal ellipses.}
\label{fig:2_periodic}
\end{figure}

\noindent
\textbf{Case} $n=3$:

From Cayley's condition, we find that the condition for 3-periodicity \cite{DR2} is $A_2=0$, which reads

$$
(2Rr_0+x_0^2-R^2)\cdot (2Rr_0-x_0^2+R^2)=0,
$$
for both hyperbolic arc boundaries, respectively. For each admissible range, suitable configurations can be found. Figure \ref{fig:BF_period_3} displays two 3-periodic configurations for each hyperbolic arc, respectively.

\begin{figure}[!htb]
\centering
\includegraphics[scale=0.65]{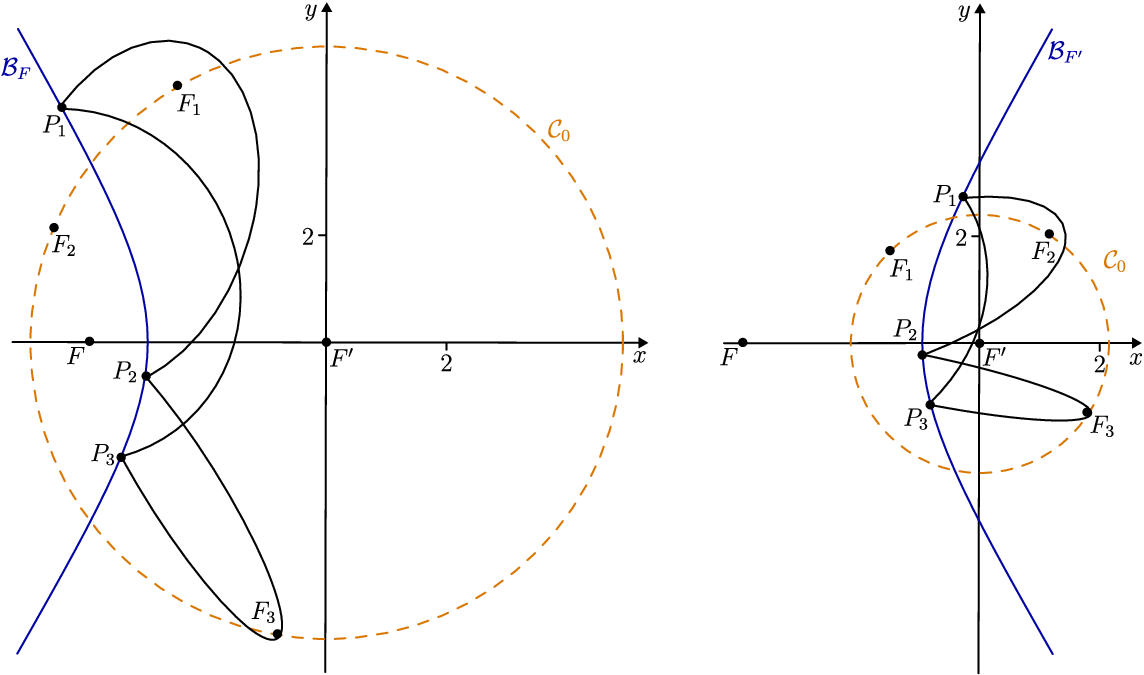}
\caption{Examples of 3-periodic orbits. \\
\textit{Left} for ${\cal{B}}_F$ and $(a_{\mathbf{K}},c_{\mathbf{K}},a,R)=(1,2,3,5)$. \\ \textit{Right} for ${\cal{B}}_{F'}$ and $(a_{\mathbf{K}},c_{\mathbf{K}},a,R)\approx (1,2,3,2.17)$.}
\label{fig:BF_period_3}
\end{figure}



\section{Limiting case 1: The parabolic boundary enclosing $F'$}
\label{sec:limits}
We may now recover two important cases previously studied, namely, that of a parabolic boundary with a constant gravitational force field parallel to its major axis \cite{Mas, J} as well as the classical integrable Boltzmann system e.g. \cite{Felder, GR}. 

We start our discussion by considering the limiting case obtained from the hyperbolic boundary arc enclosing $F'$.

\subsection{The limiting Procedure}

We consider the case ${\cal{B}}_{F'}$ of hyperbolic boundary enclosing $F'=(0,0)$, given by
$${\cal{B}}_{F'}:\frac{(x+c_{\mathbf{K}})^2}{a_{\mathbf{K}}^2}-\frac{y^2}{c_{\mathbf{K}}^2-a_{\mathbf{K}}^2}=1,~~ c_{\mathbf{K}}-a_{\mathbf{K}}>0,~~ x \geq a_{\mathbf{K}}-c_{\mathbf{K}}.$$
 The center of the Kepler force is located at $F=(-2c_{\mathbf{K}},0)$.

Defining 
\begin{align*}
    q_{\mathbf{K}}&=\frac{c_{\mathbf{K}}+a_{\mathbf{K}}}{2},\\
    p_{\mathbf{K}}&=\frac{c_{\mathbf{K}}-a_{\mathbf{K}}}{2},
\end{align*} 
we can rewrite the boundary equation as
\begin{align*}
-\frac{y^2}{4p_{\mathbf{K}}}+\frac{x^2+2x(q_{\mathbf{K}}+p_{\mathbf{K}})+(q_{\mathbf{K}}+p_{\mathbf{K}})^2-(q_{\mathbf{K}}-p_{\mathbf{K}})^2}{(q_{\mathbf{K}}-p_{\mathbf{K}})^2}\cdot q_{\mathbf{K}}&=0\\
-\frac{y^2}{4p_{\mathbf{K}}}+\frac{x^2+2x(q_{\mathbf{K}}+p_{\mathbf{K}})+4p_{\mathbf{K}}q_{\mathbf{K}}}{(q_{\mathbf{K}}-p_{\mathbf{K}})^2}\cdot q_{\mathbf{K}}&=0.
\end{align*}
For any $N>0$ with $|x|<N$, we carry out the limit $q_{\mathbf{K}}\rightarrow \infty$ with $p_{\mathbf{K}}=const.$. The boundary equation becomes
$$-\frac{y^2}{4p_{\mathbf{K}}}+2x+2p_{\mathbf{K}}=0~~\leftrightarrow ~~ x=\frac{y^2}{8p_{\mathbf{K}}}-2p_{\mathbf{K}},$$
i.e. the equation of a parabola with focus at $F'=(0,0)$. Since this is true for every $N \ge 0$, the hyperbolic boundaries tend to a parabola sharing a focus. 

In this limiting procedure, the foci-circle remains unaffected, but the Kepler ellipses focused at $F$ are stretched so much that they become parabolas themselves. 


The limiting system then corresponds to the integrable billiard model of constant gravitational force with parabolic boundary studied in \cite{Mas, J}.

The coordinate switch $(x,y)\rightarrow (y,x)$ together with setting 
$$f=2p_{\mathbf{K}}~~(=c_{\mathbf{K}}-a_{\mathbf{K}})$$ 
lead us to the more familiar case in which the directrix of the parabolic boundary is horizontal, thus parallel to the ground, and the constant force pushes the particle downward along the $y$-direction. The boundary is now written as
$${\cal{B}}_{P'}:y=\frac{x^2}{4f}-f,$$
where $f=2p_{\mathbf{K}}$ is the focal distance of the parabola with focus at $F'=(0,0)$. The foci-circle is again 
$${\cal{C}}_0:~x^2+y^2=R^2.$$

\subsection{The foci-caustic circle}

As a limiting case of families of hyperbolic arcs, it is no surprise that the foci-caustic circle also exists in this limiting system. We state this in the following theorem, and, in addition, make the mass of the moving particle and the gravitational constant explicit to establish a link to further applications in physics. Note that alternatively the associated foci-caustic circle may be obtained via a suitable limiting procedure. However, one needs to carefully define how the Kepler ellipses transform in this limit, i.e., the semi-major axis $a$ is a non-trivial function of $a_{\mathbf{K}},~c_{\mathbf{K}}$ and $R$. We therefore take the approach of directly considering parabolic arcs as trajectories and deducing the foci-caustic circle right from this setup.

\begin{theorem}
Let a particle of mass $m$ and with total energy $E$ bounces elastically at a parabolic boundary 
$${\cal{B}}_{P'}:y=\frac{x^2}{4f}-f$$ 
with focal length $f$ under the influence of a constant gravitational force $F=-mg$. Then the lines of consecutive  foci $F_i F_{i+1}$ of flight parabolas are tangent to a \textit{foci-caustic circle}
$${\cal{C}}_1:x^2+(y-y_0)^2=r_0^2,$$
where
$$y_0=\frac{\left(\frac{E}{mg}+2f\right)^2-R^2}{4f},$$
and
$$r_0^2=\left(\frac{R^2-(\frac{E}{mg})^2+4f^2}{4f}\right)^2.$$
\end{theorem}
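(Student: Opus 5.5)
We track the foci of the flight parabolas, as in the Kepler case. Set $h:=E/(mg)$. Under the constant force $-mg\,e_y$, with potential $mgy$, every flight parabola of energy $E$ has its directrix on the horizontal line $y=h$, where the kinetic energy vanishes. Since $E$ is conserved by the elastic reflections, all flight parabolas share this directrix. By the focus--directrix property, at a reflection point $P=(x_P,y_P)$ on ${\cal{B}}_{P'}$ the incoming and outgoing parabolas, with foci $F_i$ and $F_{i+1}$, satisfy
$$|PF_i|=h-y_P=|PF_{i+1}|.$$
Hence $F_i$ and $F_{i+1}$ lie on the circle ${\cal{C}}_2$ centred at $P$ with radius $h-y_P$, exactly as in the proof of the theorem for the hyperbolic boundary. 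Both foci also lie on the foci-circle ${\cal{C}}_0:x^2+y^2=R^2$, which we take as given, being the limit of the Kepler foci-circle. So the secant $F_iF_{i+1}$ is the radical axis of ${\cal{C}}_0$ and ${\cal{C}}_2$, obtained by subtracting the two circle equations:
$$2x_Px+2y_Py-(x_P^2+y_P^2)-R^2+(h-y_P)^2=0.$$

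Next we parametrize the boundary rationally as $x_P=2fs$, $y_P=f(s^2-1)$, rather than hyperbolically. We use the focal property of the parabola with focus $F'=O$: $x_P^2+y_P^2=(y_P+2f)^2$. Then the $y_P^2$ terms cancel and
$$(h-y_P)^2-(y_P+2f)^2=(h+2f)(h-2f-2y_P),$$
so the family of secants becomes a quadratic in $s$:
$$s^2\,2f\bigl(y-k\bigr)+s\,4fx+\bigl(-2f(y-k)+c\bigr)=0,\qquad k:=h+2f,\quad c:=h^2-4f^2-R^2.$$

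The envelope of this one-parameter family is given by the vanishing of the discriminant in $s$, which is simpler than the elimination via $\partial_t K=0$ used earlier. The discriminant condition reads
$$16f^2x^2=8f(y-k)\bigl(-2f(y-k)+c\bigr),$$
that is,
$$x^2+(y-k)^2-\tfrac{c}{2f}(y-k)=0.$$
Completing the square gives a circle centred on the $y$-axis with
$$y_0=k+\frac{c}{4f}=\frac{(h+2f)^2-R^2}{4f},\qquad r_0^2=\Bigl(\frac{c}{4f}\Bigr)^2=\Bigl(\frac{R^2-h^2+4f^2}{4f}\Bigr)^2.$$
These are precisely the stated formulas once $h=E/(mg)$ is substituted back.

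The main obstacle is not the algebra but justifying the two physical inputs. The first is that the directrix of every flight parabola is the common line $y=E/(mg)$; this follows from $v^2=2(E-mgy)$ and the standard focus--directrix description of free-fall trajectories. The second is that the foci stay on a circle about $F'$, i.e. that $R$ is conserved. If the limit argument from Section~\ref{sec:limits} is felt to be insufficient, we would verify directly that $|F'F_{i+1}|=|F'F_i|$. The point is that $F_{i+1}$ is the reflection of $F_i$ in the line through $P$ that bisects the directions to $F_i$ and $F_{i+1}$, and at $P$ this line is symmetric with respect to the normal of ${\cal{B}}_{P'}$, which bisects the angle at $P$ between $PF'$ and the vertical. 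Equivalently, we would check that the constructed secant tangency is consistent with the reflection law.
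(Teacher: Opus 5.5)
Your proof is correct and follows the paper's argument. It uses the common directrix $y=E/(mg)$, the circle about $P$ of radius $h-y_P$, and the radical axis with ${\cal{C}}_0$, then takes the envelope of a line family that is quadratic in the boundary parameter. Your discriminant condition in $s=x_P/(2f)$ is equivalent to the paper's elimination via $\partial F/\partial x_P=0$, and the focal identity $|F'P|=y_P+2f$ only streamlines the completing-the-square step. The paper likewise takes the foci-circle (conservation of $R$) as given, citing earlier work on the constant-gravity parabolic billiard, so your closing sketch is not needed for the proof.
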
 

We include a direct proof.

\begin{proof}
Let $y=h_{max}=\frac{E}{mg}$ be the directrix of the flight parabolas with total energy $E$. By construction, the consecutive foci $F_i$ of the flight parabolas may be regarded as the intersection points of the foci-circle ${\cal{C}}_0$
and a second circle around the point of reflection $P$ (see \cite{J}) 

$${\cal{C}}_2:(x-x_P)^2+(y-y_P)^2=(h_{max}-y_P)^2,~~(x_P,y_P)\in{\cal{B}}_{P'}.$$

This is a direct consequence of all flight parabolas sharing the same directrix $h_{max}=y_F+2f$, where $y_F$ is the $y-$coordinate of the parabola focus, as well as conservation of energy at the point of reflection. For a geometric proof based on congruency consult \cite{Mas}.
Inserting the expression of ${\cal{C}}_0$ into ${\cal{C}}_2$, we obtain a new function

\begin{align*}
    F(x,y,x_P)&=R^2-h_{max}^2+x_P^2-2x\cdot x_P+2(h_{max}-y)\cdot y_P(x_P)\\
    &=R^2-h_{max}^2+x_P^2-2x\cdot x_P+2(h_{max}-y)\cdot \left(\frac{x_P^2}{4f}-f\right).
\end{align*}

To obtain the envelope curve, we need to solve the system of equations
\begin{align*}
    F(x,y,x_P)&=0,\\
    \frac{\partial F(x,y,x_P)}{\partial x_P}&=0.
\end{align*}

From the second equation
$$\frac{\partial F(x,y,x_P)}{\partial x_P}=0$$
we obtain 
$$x_P=\frac{2fx}{2f+h_{max}-y}.$$

Inserting this into the first equation and performing some algebraic manipulations, including completing the square, we find
\begin{align*}
R^2-h_{max}^2-2f(h_{max}-y)~~~~~~~~~~~~~~~~~~~~~~~~~~~~~~~~~~~~~~~~~~~~\\
+\frac{1}{(2f+h_{max}-y)^2}\cdot \underbrace{\left(4f^2x^2-4fx^2(2f+h_{max}-y)+2(h_{max}-y)fx^2\right)}_{=-2fx^2(h_{max}+2f-y)}=0\\
\leftrightarrow ~ \frac{h_{max}^2-R^2}{2f}\cdot (2f+h_{max}-y)+(h_{max}-y)(2f+h_{max}-y)+x^2=0\\
\leftrightarrow ~x^2+\left(y-\frac{(h_{max}+2f)^2-R^2}{4f}\right)^2=R^2-h_{max}^2+\left(\frac{R^2-h_{max}^2-4f^2}{4f}\right)^2,
\end{align*}

which represents the foci-caustic circle
$${\cal{C}}_1:x^2+(y-y_0)^2=r_0^2,$$
with
\begin{align*}
    y_0&=\frac{(h_{max}+2f)^2-R^2}{4f},\\
    r_0^2&=R^2-h_{max}^2+\left(\frac{R^2-h_{max}^2-4f^2}{4f}\right)^2=\left(\frac{R^2-h_{max}^2+4f^2}{4f}\right)^2.
\end{align*}
\end{proof}

\subsection{Admissible range of parameters}

As in \cite{J}, the focus $F_\varphi=(F_x,F_y)=(R\cos(\varphi),R\sin(\varphi)),~ \varphi\in [0,2\pi[$
and directrix $h_{max}=\frac{E}{mg}$ determines the flight parabola by
$$y=-\frac{(x-R\cos(\varphi))^2}{2(h_{max}-R\sin(\varphi))}+\frac{h_{max}+R\sin(\varphi)}{2}.$$

When the particle hits the boundary, the flight parabolas intersect the boundary at two points each (counted with multiplicities). The discriminant of the associated quadratic equation
$$\frac{x^2}{4f}-f=-\frac{(x-R\cos(\varphi))^2}{2(h_{max}-R\sin(\varphi))}+\frac{h_{max}+R\sin(\varphi)}{2},$$
has to be positive. This leads to the inequality 
$$(h_{max}+2f)^2>R^2.$$ 

Furthermore, we have $F_y<h_{max}$, from which it follows that
$$R\sin(\varphi)<h_{max} \Leftrightarrow R<h_{max}~\mbox{for}~\varphi=\frac{\pi}{2}.$$

We can summarize the findings as follows. For fixed $f$, the set of admissible parameters $(E,R)$ is given by:
\begin{align*}
    0\leq R &<\frac{E}{mg}+2f,~~~\mbox{for}~E>0,\\
    -\frac{E}{mg}<R&<\frac{E}{mg}+2f,~~~\mbox{for}~-mgf<E\leq 0.
\end{align*}

See Figure \ref{fig:admiss_range_parabolic} for a graphical representation. 

Note that in the extreme case $R=\frac{E}{mg}+2f$ the only admissible point of reflection with the boundary is $x=0$. In this case, the foci-circle coincides with the foci-caustic circle.

\begin{figure}[!htb]
\centering
\includegraphics[scale=0.8]{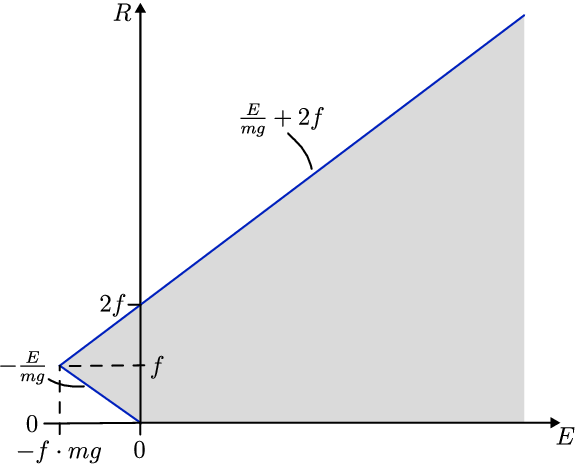}
\caption{Admissible range $(E,R)$ for physical reflections of the flight parabola with the boundary ${\cal{B}}_{P'}$.}
\label{fig:admiss_range_parabolic}
\end{figure}

\subsection{Relative positions of the two circles} 
As in Section \ref{subsec:BF circles}, we analyze the relative positions for the two circles ${\cal{C}}_0$ and ${\cal{C}}_1$.  We consider a fixed value of $h_{max}=\frac{E}{mg}$ and vary $R$ according to the admissible range from low to high values.

When considering $r_0^2$, we see that $r_0=0$ for $R=\sqrt{h_{max}^2-4f^2}$. Since we interpret $r_0$ as a positive length, we thus obtain two cases to consider
$$r_0=\begin{cases} \frac{h_{max}^2-4f^2-R^2}{4f}~~\mbox{for}~0\leq R\leq \sqrt{h_{max}^2-4f^2}~~\mbox{domain (I)}\\
\frac{R^2+4f^2-h_{max}^2}{4f}~~\mbox{for}~\sqrt{h_{max}^2-4f^2}<R\leq h_{max}+2f ~~\mbox{domain (II)}
\end{cases}
$$
We consider different cases related to the two different choices of $r_0$.

\vspace*{0.2cm}
\noindent
\textbf{Case 1} $h_{max}>2f$:
\vspace*{0.2cm}

Define $d_1=y_0-r_0-R$ as the minimum distance between the two distinct circles. Analogously to the analysis in Section \ref{subsec:BF circles} we determine the zeros of $d$ regarded as a function of $R$ depending on the representation of $r_0$ for the two different domains $(I)$ and $(II)$.

For the first domain $0<R<\sqrt{h_{max}^2-4f^2}$ we have
$$d_1=\frac{h_{max}^2+4h_{max}f+4f^2-R^2}{4f}-\frac{h_{max}^2-4f^2-R^2}{4f}-R=0.$$
The only positive root is given by $R=h_{max}+2f$ which is not part of the first domain; thus, the foci-caustic circle ${\cal{C}}_1$ lies outside the foci-circle ${\cal{C}}_0$. 

At $R=\sqrt{h_{\max}^2-4f^2}$, the circle ${\cal{C}}_1$ shrinks to a point, which corresponds to 2-periodic orbits.

For $\sqrt{h_{\max}^2-4f^2}<R<h_{max}$ we can perform the same analysis for determining the roots of $d$, but this time with the expression for $r_0$ of the second domain
$$d_1=\frac{h_{max}^2+4h_{max}f+4f^2-R^2}{4f}-\frac{R^2-h_{max}^2+4f^2}{4f}-R=0.$$
A direct calculation shows that the only root lying in the suitable domain is given by $R=h_{max}$. Thus for $\sqrt{h_{max}2-4f^2}<R<h_{max}$ the two circles are distinct but $r_0$ increases. For $R=h_{max}$ the circles are tangent to another. Further increasing $R$ the two circles intersect until finally they are identical for $R=h_{max}+2f$.


\vspace*{0.2cm}
\noindent
\textbf{Case 2} $h_{max}=2f$:
\vspace*{0.2cm}

We can use the results of the calculations carried out in Case 1:
When $R=0$, both circles degenerate into points. For $0<R<h_{max}$ we find two distinct circles. The two circles are tangent when $R=h_{max}$. For $h_{max}<R<h_{max}+2f$, the circles intersect transversally at two points. Finally, when $R=h_{max}+2f$, the two circles coincide.

\vspace*{0.2cm}
\noindent
\textbf{Case 3} $0<h_{max}<2f$:
\vspace*{0.2cm}

Again, using the results of Case 1: For $R=0$ we have $r_0>0$, thus we have two distinct circles with ${\cal{C}}_0$ degenerated to a point. When $0<R<h_{max}$, the two circles are disjoint, and are tangent to each other when $R=h_{max}$. For $h_{max}<R <h_{max}+2f$ the two circles intersect transversally at two points. Finally, when $R=h_{max}+2f$, the two circles coincide.

\vspace*{0.2cm}
\noindent
\textbf{Case 4} $h_{max}=0$:
\vspace*{0.2cm}

Again, using limits of former results: For $R=0$ we have $y_0=r_0>0$. Thus  ${\cal{C}}_0$ degenerates to a point with ${\cal{C}}_0\in {\cal{C}}_1$.
For $0<R<h_{max}+2f$ the two circles intersect transversally at two points.  They are identical when $R=h_{max}+2f$.

\vspace*{0.2cm}
\noindent
\textbf{Case 5} $-f<h_{max}<0$: 
\vspace*{0.2cm}

Again, using limits of the previous results: Now the roles of ${\cal{C}}_0$ and ${\cal{C}}_1$ are interchanged; thus, we define a new minimal distance of the two circles by $d_2=y_0+R-r_0$. Again, if $d_2>0$, the two circles are disjoint, if $d_2=0$, they are tangent, and if $d_2<0$, ${\cal{C}}_0$ lies inside ${\cal{C}}_1$. By an analogous calculation for the zeros of $d_2$ we find (with respect to the admissible range):
When $|h_{max}|< R < h_{max}+2f$, the two circles intersect transversally at two points. 
When $R=h_{max}+2f$ the two circles coincide again.

\vspace*{0.2cm}
\noindent
\textbf{Case 6} $-f=h_{max}$: 
Using limiting results from Case 5 we find that $R=-h_{max}=f$. Thus the two circles are identical.
\vspace*{0.2cm}

\vspace*{0,5cm}
\textbf{Note:} The focal reflection property carries over (see subsection \ref{subsec:focal_reflection_property}) to the parabolic boundary with constant gravitational force namely, for all cases, when $R=h_{max}$, all associated flight parabolas pass through the focus $F'=(0,0)$ of ${\cal{B}}_{P'}$. Furthermore, consecutive flight parabolas asymptotically converge to the vertical line (remember the change of coordinate system) $x=0$.

\subsection{$n$-periodic orbits for small $n$}
From the analysis of the relative positions of the foci- and foci-caustic circles, we see that in this parabolic case, for a billiard with constant gravitational force, we also have the Poncelet property. 

An interesting case occurs for $n=2$. Indeed, every confocal flight parabola corresponding to $R=0$ yields a two-periodic orbit. In addition, there are other two-periodic orbits when $\frac{E}{mg}\geq 2f$. They correspond to the case $r_0=0$, so the parameter values satisfy $R=\sqrt{\left(\frac{E}{mg}\right)^2-4f^2}$. See Figure \ref{fig:period_2_parabolic}.

\begin{figure}[!htb]
\centering
\includegraphics[scale=0.7]{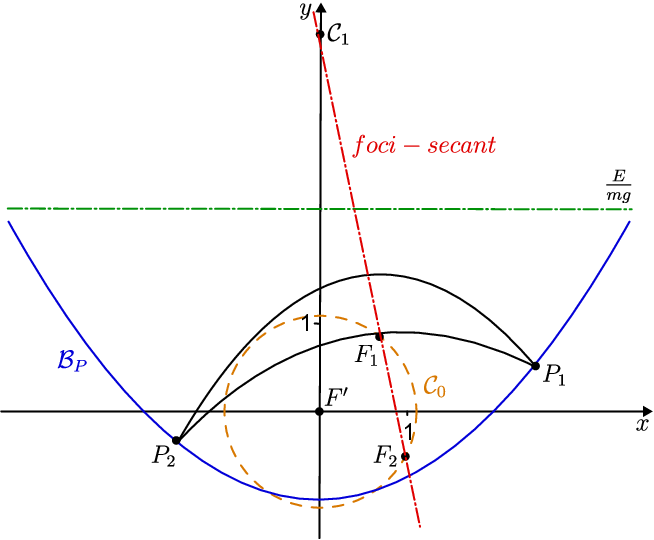}
\caption{Non-trivial 2-periodic orbit for $f=1$, $h_{max}\approx 2.3$ and $R\approx 1.1$.}
\label{fig:period_2_parabolic}
\end{figure}

We also illustrate a further example of  $4$-periodicity. For $4-$periodic orbits, the corresponding condition is
$$A_3=0~\Leftrightarrow ~ (R^2-r_0^2)\cdot (2r_0^2(R^2+y_0^2)-(R^2-y_0^2)^2)=0,$$

Figure \ref{fig:period_4_parabolic} displays some $4$-periodic orbits with $f=1,~\frac{E}{mg}=3$ and {${R\approx 3.385}$}. Note that the family of $4-$periodic orbits appear symmetrically with respect to the $y-$axis.

\begin{figure}[!htb]
\centering
\includegraphics[scale=0.7]{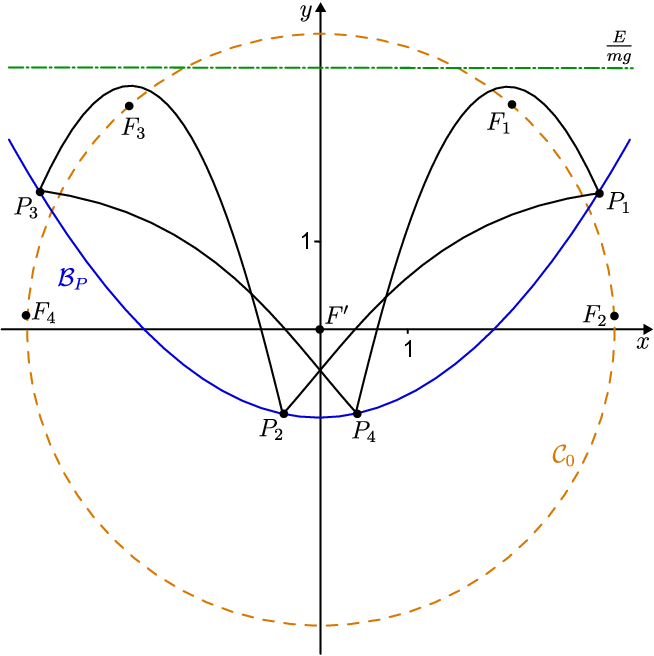}
\caption{Symmetry $4-$periodic orbit.}
\label{fig:period_4_parabolic}
\end{figure}

\section{Limiting Case 2: The parabolic boundary enclosing $F$}
\label{sec:limits2}
\subsection{Another parabolic boundary case}
We now consider ${\cal{B}}_{F}$ in a system of coordinates such that $F$ is at the origin: 
$${\cal{B}}_F:\frac{(x-c_{\mathbf{K}})^2}{a_{\mathbf{K}}^2}-\frac{y^2}{c_{\mathbf{K}}^2-a_{\mathbf{K}}^2}=1,\qquad x\leq c_{\mathbf{K}}-a_{\mathbf{K}}.$$

Defining again
\begin{align*}
p_{\mathbf{K}}:=\frac{c_{\mathbf{K}}-a_{\mathbf{K}}}{2},\\
q_{\mathbf{K}}:=\frac{c_{\mathbf{K}}+a_{\mathbf{K}}}{2},
\end{align*}
and taking this time the limit $p_{\mathbf{K}}\rightarrow \infty$ with $q_{\mathbf{K}}=const.$ along a suitable family of these hyperbolic branches, we obtain as limit the parabola focused at $F$
$${\cal{B}}_P:x=-\frac{y^2}{4f}+f,$$
which has focal length $f=2q_{\mathbf{K}}~~(=c_{\mathbf{K}}+a_{\mathbf{K}})$.

Along this limiting procedure we have $F'\rightarrow \infty$. As the semi-major axis $a$ of the Kepler orbits is assumed to be fixed in this limit, we need to send $R$ to $\infty$ as well. With an appropriate normalization, the foci-circles converge in the limit to a line of the form 
$${\tilde{\cal{C}}}_0:x=s_{0}.$$
We call this the \emph{foci-line}, which has been previously discussed in \cite{JZ2}.

\subsection{Admissible range for the parabolic boundary}
We determine the conditions for the admissible parameter range $(a,s_0)$ such that the corresponding Kepler flight ellipses with focus at $F$ can intersect the boundary ${\cal{B}}_P$.

Clearly, if $s_0\geq f$ we need to have $2a \ge s_0$.  For the case $s_0<f$, we consider the following proposition.
\begin{prop}
Let $s_0<f$. Then the family of Kepler ellipses with semi-major axis $a=f-\frac{s_0}{2}$ and second focus $F_i=(s_0,F_y)$ with $F_y\in[-\sqrt{4f(f-s_0)},\sqrt{4f(f-s_0)}]$ are tangent to the confocal parabola ${x=-\frac{y^2}{4f}+f}$ with focus $F=0$, which is the center of the attractive Kepler force.
\end{prop}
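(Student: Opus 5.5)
The plan is to avoid computing any intersection and instead use the focus--directrix property of ${\cal{B}}_P$ together with the triangle inequality, in the same spirit as the admissible-range arguments of Section \ref{sec:parameter_range_hyp}. The parabola $x=-\frac{y^2}{4f}+f$ has focus $F=(0,0)$ and directrix $x=2f$. Hence every point $Q=(x_Q,y_Q)\in{\cal{B}}_P$ satisfies
$$|QF|=2f-x_Q .$$
Fix $F_i=(s_0,F_y)$ and $a=f-\frac{s_0}{2}$, so that $2a=2f-s_0>0$ because $s_0<f$. The Kepler ellipse with foci $F$ and $F_i$ is the level set $\{Q:\ |QF|+|QF_i|=2a\}$, and its interior is the region where this sum is $<2a$.

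\textbf{Step 1 (lower bound on the boundary).} For every $Q\in{\cal{B}}_P$, use $|QF_i|\ge x_Q-s_0$, since the Euclidean distance dominates the horizontal displacement. This gives
$$|QF|+|QF_i|\ \ge\ (2f-x_Q)+(x_Q-s_0)\ =\ 2f-s_0\ =\ 2a .$$
So no point of the parabola lies in the interior of the ellipse.

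\textbf{Step 2 (equality case).} Equality holds if and only if $|QF_i|=x_Q-s_0$. This means $y_Q=F_y$ and $x_Q\ge s_0$. The unique point of ${\cal{B}}_P$ at height $F_y$ is $x_Q=f-\frac{F_y^2}{4f}$. The condition $x_Q\ge s_0$ is then exactly $F_y^2\le 4f(f-s_0)$, which is the interval stated in the proposition. For such $F_y$ the ellipse and the parabola therefore share exactly one point, $P=\bigl(f-\frac{F_y^2}{4f},F_y\bigr)$. By Step 1 the parabola stays in the closed exterior of the ellipse.

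\textbf{Step 3 (tangency).} A smooth curve that meets a closed convex curve at a single point and otherwise stays in its exterior must be tangent to it there. To make this concrete, note that the function $Q\mapsto|QF|+|QF_i|$ restricted to ${\cal{B}}_P$ attains its minimum $2a$ at $P$. Its derivative along ${\cal{B}}_P$ therefore vanishes at $P$. So the tangent of ${\cal{B}}_P$ at $P$ is orthogonal to the gradient of $|QF|+|QF_i|$, which is the normal of the ellipse. This yields tangency whenever $P\ne F,F_i$. Equivalently, the tangent line at $P$ bisects the angle between $PF$ and $PF_i$, which is the reflection law used throughout the paper.

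\textbf{Main obstacle.} The main care point is the endpoint case $F_y^2=4f(f-s_0)$. There $P=F_i$, the gradient argument degenerates, and the "ellipse" is still nondegenerate but touches ${\cal{B}}_P$ exactly at its second focus. I would treat this case by the minimization argument directly: $P$ is still the unique common point and the parabola lies outside. Alternatively, I would pass to the limit from interior values of $F_y$. Finally, I would remark that for $|F_y|$ beyond the bound, Step 2 shows the sum is strictly $>2a$ on ${\cal{B}}_P$, so those ellipses miss the boundary. This shows the interval is sharp and feeds into the admissible range $2a\ge 2f-s_0$ for $s_0<f$.
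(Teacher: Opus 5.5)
\paragraph{Comparison with the paper.} Your argument is correct on the open interval $F_y^2<4f(f-s_0)$, but it runs in the opposite direction from the paper's proof. The paper starts from an assumed tangency at $P$. It uses the optical property of the parabola (the ray $FP$ reflects parallel to the axis) to conclude that $F_iP$ is horizontal. It then computes $2a=|F_iP|+|PF|=2f-s_0$ via $\sqrt{x_P^2+y_P^2}=\frac{y_P^2}{4f}+f$, which is exactly your focus--directrix identity $|PF|=2f-x_P$.

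You instead fix $a=f-\frac{s_0}{2}$ and prove the global inequality $|QF|+|QF_i|\ge 2f-s_0$ on all of ${\cal{B}}_P$, so tangency comes from a minimization rather than from the reflection law. This follows the direction of the statement as written, and it gives more:
\begin{itemize}
\item a single contact point, with the parabola lying outside the ellipse;
\item since Step 1 holds for every $F_i$ on the line $x=s_0$, a direct proof that ellipses with $2a<2f-s_0$ never reach the wall. This is the admissible-range claim the paper only asserts afterwards.
\end{itemize}
The paper's route is shorter and ties the tangency point directly to the reflection law used elsewhere.

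\paragraph{The endpoint case is wrong.} If $F_y^2=4f(f-s_0)$, then $F_i$ itself lies on ${\cal{B}}_P$. By your own identity, $|FF_i|=2f-s_0=2a$. So the ``ellipse'' is not nondegenerate: it collapses to the collision segment $[F,F_i]$. That segment is a focal radius from an interior point, so it meets the parabola transversally at $F_i$ and is not tangent to it.

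Neither your minimization argument nor a limit from interior values can produce tangency there. The proposition holds literally only for the open interval. The paper's proof tacitly uses the same restriction when it writes $s_0<x_P$.

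\paragraph{The sharpness remark is also off.} For $F_y^2>4f(f-s_0)$ one has $|FF_i|>2a$, so no ellipse with these foci and this $a$ exists at all. The interval is exactly the existence condition for the orbit (equivalently, $F_i$ lies inside the parabola). It is not a condition for meeting the boundary.
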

\begin{proof}
Let $s_0<f$ and choose an arbitrary position of the second Kepler focus $F_i$ along the foci-line $x=s_0$ such that the $y$-component $F_y$ of the second focus lies within the parabolic boundary $x=-\frac{y^2}{4f}+f$ which gives 
$$F_y\in[-\sqrt{4f(f-s_0)},\sqrt{4f(f-s_0)}].$$ 
We assume the Kepler ellipse to be tangent to the parabolic boundary (see Figure \ref{fig:family_tangency}).

\begin{figure}[htb]
\centering
\includegraphics[scale=1]{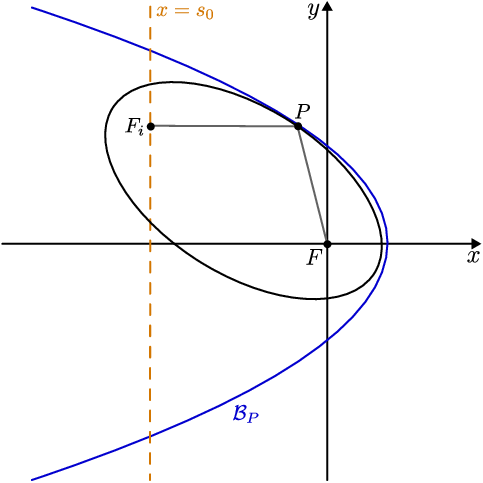}
\caption{Kepler ellipse with point of reflection tangent to ${\cal{B}}_P$ and foci-line $s_0<f$.}
\label{fig:family_tangency}
\end{figure}

Let $P=(x_P,y_P)$ be the point of reflection, i.e. by assumption the point of tangency of the Kepler ellipse with the parabolic boundary. Since the focus of the parabolic boundary is at the Kepler center, we may regard $\overline{FP}$ and $\overline{F_iP}$ as light rays. By the optical focusing property of the parabola we immediately know that $\overline{F_iP}$ must be parallel to the $x$-axis, thus $s_0<x_P$. For the semi-major axis of the tangent Kepler ellipse, we thus find, with ${x_P=-\frac{y_P^2}{4f}+f}$, that
\begin{align*}
2a&=|F_iP|+|PF|=x_P-s_0+\sqrt{x_P^2+y_P^2}\\
&=-\frac{y_P^2}{4f}+f-s_0+\sqrt{\left(-\frac{y_P^2}{4f}+f\right)^2+y_P^2}\\
&=-\frac{y_P^2}{4f}+f-s_0+\sqrt{\left(\frac{y_P^2}{4f}+f\right)^2}\\
&=2f-s_0=const.
\end{align*}
We thus conclude that there exists a family of Kepler ellipses tangent to the parabolic boundary with equal semi-major axis $a=f-\frac{s_0}{2}$ and second focus $F_i=(s_0,F_y)$, with $F_y$ restricted by the parabolic boundary as stated above.
\end{proof}
As a direct consequence of the above proposition, we have that the Kepler ellipses with $s_0<f$ undergo physical reflections at the parabolic boundary if $a>f-\frac{s_0}{2}$. We summarize the results in the following proposition.

\begin{prop}
The parameters $(a,s_0)$ for physical reflections of the Kepler ellipses with the boundary ${\cal{B}}_P$ are given by:
\begin{align*}
s_0&<2a,~~\mbox{if}~f\le s_0,\\
2f-2a&<s_0,~~\mbox{if}~ s_0< f.
\end{align*}
\end{prop}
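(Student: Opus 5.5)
We treat the two regimes $s_0\geq f$ and $s_0<f$ separately, in the same way Section~\ref{sec:parameter_range_hyp} handles the hyperbolic branches. In each regime we find the threshold value of $a$ at which a Kepler ellipse with focus $F=0$ and second focus on the foci-line $\tilde{\cal C}_0:x=s_0$ first touches ${\cal B}_P$. For larger $a$ it crosses ${\cal B}_P$. Throughout we use that ellipses with fixed focus $F$ grow monotonically in $a$ once the second focus is fixed. The physical side is the side of ${\cal B}_P$ not containing $F$, i.e.\ $x>-\frac{y^2}{4f}+f$, and the foci-line meets the parabola exactly when $s_0<f$.

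\textbf{Case $s_0\geq f$.} Here the whole foci-line lies on the far side of ${\cal B}_P$, except the vertex point when $s_0=f$. The argument is the degenerate-ellipse one used for Case 2 of the branch ${\cal B}_F$.

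A Kepler ellipse with foci $F=0$ and $F_i=(s_0,F_y)$ contains the segment $\overline{FF_i}$ in its interior. Since $F$ and $F_i$ lie on opposite sides of the convex region bounded by ${\cal B}_P$, this segment must cross ${\cal B}_P$. So an intersection exists as soon as the ellipse exists, i.e.\ as soon as $2a>|FF_i|$. Choosing $F_y=0$ minimizes $|FF_i|=\sqrt{s_0^2+F_y^2}$, so admissibility is exactly $2a>s_0$. Conversely, if $2a\le s_0$, no focus $F_i$ on the foci-line admits a nondegenerate ellipse at all. The boundary case $2a=s_0$ with $F_y=0$ gives the degenerate segment, which is excluded by the strict inequality.

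\textbf{Case $s_0<f$.} By the preceding proposition, every Kepler ellipse with $a=f-\frac{s_0}{2}$ and $F_i=(s_0,F_y)$, with $|F_y|\le\sqrt{4f(f-s_0)}$, is tangent to ${\cal B}_P$. Increasing $a$ with $F_i$ fixed enlarges the ellipse, so it crosses the boundary and physical reflections occur. This gives sufficiency of $a>f-\frac{s_0}{2}$, i.e.\ $2f-2a<s_0$.

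For necessity we must show that if $a\le f-\frac{s_0}{2}$, no ellipse with second focus anywhere on the foci-line reaches the physical side. We split on the position of $F_i$:
\begin{itemize}
\item For $F_i$ inside the parabola, the tangency computation generalizes. For $P$ on ${\cal B}_P$ we have $|FP|=f+\frac{y_P^2}{4f}=2f-x_P$ (the focal distance equals the distance to the directrix $x=2f$), and $|F_iP|\ge x_P-s_0$. Hence $|FP|+|F_iP|\ge 2f-s_0$, with equality only along the horizontal ray. So $2a\ge 2f-s_0$ is needed to reach ${\cal B}_P$, and strictly more is needed to cross it.
\item For $F_i$ outside the parabola, i.e.\ $|F_y|>\sqrt{4f(f-s_0)}$, the reflection is trivially reachable, but only with $2a>|FF_i|>f$. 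We need to check this is compatible with the claimed region. Here $|FF_i|^2=s_0^2+F_y^2>s_0^2+4f(f-s_0)=(2f-s_0)^2$, so $2a>|FF_i|>2f-s_0$ again forces $2a>2f-s_0$.
\end{itemize}

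The main obstacle is making the necessity direction rigorous, namely the inequality $|F_iP|\ge x_P-s_0$ together with the outside-focus estimate. The identity $(2f-s_0)^2=s_0^2+4f(f-s_0)$ is what ties the two subcases together, and I would verify it first.
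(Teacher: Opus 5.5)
Your proposal is correct and takes essentially the paper's route. For $s_0\ge f$ you settle the case with the existence condition $2a>|FF_i|\ge s_0$, and for $s_0<f$ you take the threshold $a=f-\frac{s_0}{2}$ from the tangent family of the preceding proposition; your bound $|FP|+|F_iP|\ge (2f-x_P)+(x_P-s_0)=2f-s_0$ then supplies the necessity direction, which the paper only asserts. This bound is immediate and holds for every $F_i$ on the foci-line $x=s_0$, not only for foci inside the parabola, so it is no real obstacle, and your separate outside-focus subcase is correct but redundant.
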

\label{prop:admiss_range_B_P}

See Figure \ref{fig:admiss_range_parabolic_F} for a graphical representation.

\begin{figure}[htb]
\centering
\includegraphics[scale=0.8]{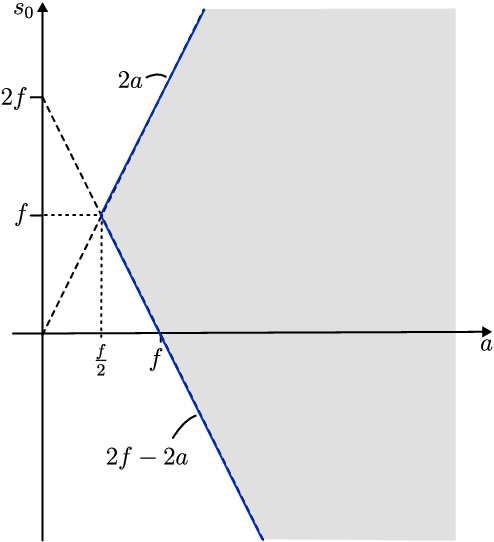}
\caption{Admissible domain for physical reflections along ${\cal{B}}_P$.}
\label{fig:admiss_range_parabolic_F}
\end{figure}

\subsection{The elliptic curve and periodic orbits}
Here we derive the associated elliptic curve for this limiting case via a limiting procedure.

\begin{theorem}
The elliptic curve associated to the parabolic boundary enclosing $F$ with focal length $f$ and foci-line $x=s_0$ is given by
$$\bar{{\cal{D}}}:\bar{y}^2=-(t+1)\cdot \left(2t+\frac{2a+s_0}{f}\right)^2.$$
\end{theorem}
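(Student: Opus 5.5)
The plan is to obtain $\bar{\cal D}$ as a limit of the curve ${\cal D}:\,y^2=-(t+1)\bigl(R^2t^2-(x_0^2-r_0^2-R^2)t+r_0^2\bigr)$ from Subsection \ref{Subsection: The associated elliptic curve}. I will take the ${\cal B}_F$ data of the foci-caustic theorem, i.e.\ the lower signs $\mp=+$:
$$x_0=\frac{c_{\mathbf{K}}}{2(a_{\mathbf{K}}^2-c_{\mathbf{K}}^2)}\bigl[R^2-4(a+a_{\mathbf{K}})^2\bigr],\qquad r_0=2(a+a_{\mathbf{K}})+\frac{a_{\mathbf{K}}}{c_{\mathbf{K}}}x_0 .$$
Then I push these through the limit $p_{\mathbf{K}}\to\infty$ with $q_{\mathbf{K}}$ fixed and $f=c_{\mathbf{K}}+a_{\mathbf{K}}=2q_{\mathbf{K}}$.

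The first step is to make the coupling of $R$ with $p_{\mathbf{K}}$ explicit. After translating so that $F$ is the origin, $F'$ sits at $(2c_{\mathbf{K}},0)$. The leftmost point of the foci-circle is then $x=2c_{\mathbf{K}}-R$, and requiring this to converge to the foci-line $x=s_0$ forces $R=2c_{\mathbf{K}}-s_0$. The semi-major axis $a$ is held fixed throughout. All quantities then become explicit in $p_{\mathbf{K}}$, using
$$c_{\mathbf{K}}=q_{\mathbf{K}}+p_{\mathbf{K}},\qquad a_{\mathbf{K}}=q_{\mathbf{K}}-p_{\mathbf{K}},\qquad a_{\mathbf{K}}^2-c_{\mathbf{K}}^2=-4p_{\mathbf{K}}q_{\mathbf{K}}=-2p_{\mathbf{K}}f .$$
The bracket factors as $R^2-4(a+a_{\mathbf{K}})^2=(R-2a-2a_{\mathbf{K}})(R+2a+2a_{\mathbf{K}})=(4p_{\mathbf{K}}-s_0-2a)(2f+2a-s_0)$. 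Hence
$$x_0=-\frac{(q_{\mathbf{K}}+p_{\mathbf{K}})(4p_{\mathbf{K}}-s_0-2a)(2f+2a-s_0)}{4p_{\mathbf{K}}f},$$
a rational function of $p_{\mathbf{K}}$ growing linearly. Likewise $r_0$ and $R$ are linear in $p_{\mathbf{K}}$ up to rational corrections.

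The second step is to expand the three coefficients $R^2$, $x_0^2-r_0^2-R^2$ and $r_0^2$ of the quadratic factor in powers of $p_{\mathbf{K}}$, each up to order $p_{\mathbf{K}}^2$. I then rescale $y=p_{\mathbf{K}}\bar y$ and divide the cubic by $p_{\mathbf{K}}^2$, which leaves the factor $(t+1)$ untouched. The expected outcome is that $R^2/p_{\mathbf{K}}^2\to4$, $r_0^2/p_{\mathbf{K}}^2\to k^2$ and $-(x_0^2-r_0^2-R^2)/p_{\mathbf{K}}^2\to4k$, where $k=(2a+s_0)/f$. The quadratic factor then tends to $4t^2+4kt+k^2=(2t+k)^2$, which is exactly the claimed $\bar{\cal D}$.

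The main obstacle is the middle coefficient. Both $x_0^2$ and $r_0^2$ have leading terms of order $p_{\mathbf{K}}^2$ that partially cancel against each other and against $R^2$. I will therefore compute $x_0^2-r_0^2=(x_0-r_0)(x_0+r_0)$ directly rather than separately. Since $a_{\mathbf{K}}/c_{\mathbf{K}}\to-1$, the combination $x_0+r_0$ is of lower order, and its subleading term must be tracked exactly. As a consistency check, the limit curve being singular (a double root at $t=-k/2$) is expected geometrically: the foci-circle degenerates to a line, which is tangent to the caustic circle's dual at infinity, much as $\Delta=0$ in Section 5. I will also verify that the normalization factor $p_{\mathbf{K}}^2$ is independent of $t$, so the rescaling is a legitimate real isomorphism preserving the sign convention discussed earlier.
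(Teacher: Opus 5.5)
Your strategy (rescale the quadratic factor of ${\cal D}$ by the square of the diverging parameter and read off a double root) is the same formal limit the paper takes. As you have set it up, however, it produces the wrong curve.

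With $c_{\mathbf{K}}=q_{\mathbf{K}}+p_{\mathbf{K}}$, $a_{\mathbf{K}}=q_{\mathbf{K}}-p_{\mathbf{K}}$ and $p_{\mathbf{K}}\to\infty$, you have $a_{\mathbf{K}}<0$. The hyperbola only sees $a_{\mathbf{K}}^2$, but the two branch formulas for $(x_0,r_0)$ are exchanged by $a_{\mathbf{K}}\mapsto -a_{\mathbf{K}}$. So inserting this negative $a_{\mathbf{K}}$ into the $\mp=+$ formulas gives the caustic of the wrong branch: the one whose vertex lies at distance $c_{\mathbf{K}}+|a_{\mathbf{K}}|=2p_{\mathbf{K}}$ from $F$ and escapes to infinity. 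You need the branch whose vertex stays at distance $f=c_{\mathbf{K}}-|a_{\mathbf{K}}|$ and converges to ${\cal B}_P$.

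If you carry out your own expansion (note $2(a+a_{\mathbf{K}})\approx-2p_{\mathbf{K}}$), you get $R/p_{\mathbf{K}}\to 2$, $x_0/p_{\mathbf{K}}\to-(2f+2a-s_0)/f$ and $r_0/p_{\mathbf{K}}\to(2a-s_0)/f$. Hence
$$\bar y^2=-(t+1)\Bigl(2t-\frac{2a-s_0}{f}\Bigr)^2,$$
which is not $\bar{\cal D}$. Your ``expected outcome'' $r_0^2/p_{\mathbf{K}}^2\to k^2$ is asserted rather than derived, and it is false under your substitution. As a check, with this curve the period-$3$ condition $A_2=0$ would read $s_0-2a=8f$, which is never admissible. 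The paper's $3$-periodic example $(f,a,s_0)=(1,3,2)$ instead satisfies $2a+s_0=8f$.

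The repair is to use the true semi-axis of the surviving branch. Keep $a_{\mathbf{K}}>0$, fix $f=c_{\mathbf{K}}-a_{\mathbf{K}}$, and let $\lambda=(c_{\mathbf{K}}+a_{\mathbf{K}})/2\to\infty$; equivalently, keep your parametrization but use the $\mp=-$ formulas. Then:
\begin{itemize}
\item $R-2(a+a_{\mathbf{K}})=2f-2a-s_0$ stays bounded;
\item $R+2(a+a_{\mathbf{K}})=4\lambda+2a-s_0$ and $a_{\mathbf{K}}^2-c_{\mathbf{K}}^2=-2f\lambda$;
\item hence $R/\lambda\to 2$, $x_0/\lambda\to-(2-k)$ and $r_0/\lambda\to k$ with $k=(2a+s_0)/f$.
\end{itemize}
The quadratic factor divided by $\lambda^2$ then tends to $4t^2+4kt+k^2=(2t+k)^2$, as claimed.

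Two smaller corrections. First, the step you flagged as the main obstacle is not one. All three coefficients have nonzero limits at order $\lambda^2$ with no cancellation, and $x_0+r_0$ is not of lower order; in your convention it is $\approx -2p_{\mathbf{K}}$. Second, the double root does not come from a tangency at infinity. After rescaling, both circles pass through the point $(s_0,0)$ of the foci-line and are tangent there: the leftmost point $2c_{\mathbf{K}}+x_0-r_0$ of ${\cal C}_1$ tends to $s_0$.

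The text of Section \ref{sec:limits2} ($p_{\mathbf{K}}\to\infty$ with $f=2q_{\mathbf{K}}=c_{\mathbf{K}}+a_{\mathbf{K}}$) invites exactly this sign slip. The paper's proof leaves the limit implicit, and its stated curve corresponds to the corrected convention.
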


\begin{proof}
We consider ${\cal{C}}_{0}$ and ${\cal{C}}_{1}$ in a family of systems with hyperbolic boundary enclosing $F$, shifting the origin to $F$. The expressions for the associated foci-circles and foci-caustic circles are respectively
\begin{align*}
{\cal{C}}_0&:(x-2c_{\mathbf{K}})^2+y^2=(2 c_{\mathbf{K}}-s_0)^2,\\
{\cal{C}}_1&:(x-x_0-2c_{\mathbf{K}})^2+y^2=r_0^2,
\end{align*}
where, again, we define the radius of the foci-circle as $R=2c_{\mathbf{K}}-s_0$ where $s_0$ is the signed distance from the foci-line to the parallel $y$-axis. Note that the expressions for $x_0$ and $r_0$ are given in Section \ref{sec:foci_caustic}.

As in Section \ref{sec:Cayley}, we consider the two associate quadratic matrices
$$Q_0[x,y,z]\mapsto Q_0=\begin{pmatrix}
1 & 0 &-2c_{\mathbf{K}}\\
0 & 1 & 0\\
-2c_{\mathbf{K}} & 0 & 4c_{\mathbf{K}}s_0-s_0^2
\end{pmatrix}$$
and 
$$Q_1[x,y,z]\mapsto Q_1=\begin{pmatrix}
1 & 0 &-2c_{\mathbf{K}}-x_0\\
0 & 1 & 0\\
-2c_{\mathbf{K}}-x_0 & 0 & (x_0+2c_{\mathbf{K}})^2-r_0^2
\end{pmatrix}$$
in projective space, to which the elliptic curve
\begin{align*}
y^2&=-\det(t\cdot Q_0+Q_1)\\
&=-(t+1)\cdot \left[(t+1)\cdot (4c_{\mathbf{K}}s_0-ts_0^2+(x_0+2c_{\mathbf{K}})^2-r_0^2)-(2c_{\mathbf{K}}t+2c_{\mathbf{K}}+x_0)^2\right].
\end{align*}
is associated.

By the corresponding limiting procedure we obtain the associated limiting elliptic curve as
$$\bar{y}^2=-(t+1)\cdot \left(2t+\frac{2a+s_0}{f}\right)^2.$$

\end{proof}

Periodic orbits with period $n \ge 3$ can be obtained again by Cayley's criterium as recalled in Section \ref{sec:Cayley}. 

The first two of such conditions read:
\begin{align*}
n=3:~A_2=0~~&\Leftrightarrow ~~ 2a+s_0=8f,\\
n=4:~A_3=0~~&\Leftrightarrow ~~ 2a+s_0=4f.
\end{align*}

Graphical representations of these two cases are shown in Figure \ref{fig:parabolic_F_orbits}.

\begin{figure}[!htb]
\centering
\includegraphics[scale=0.8]{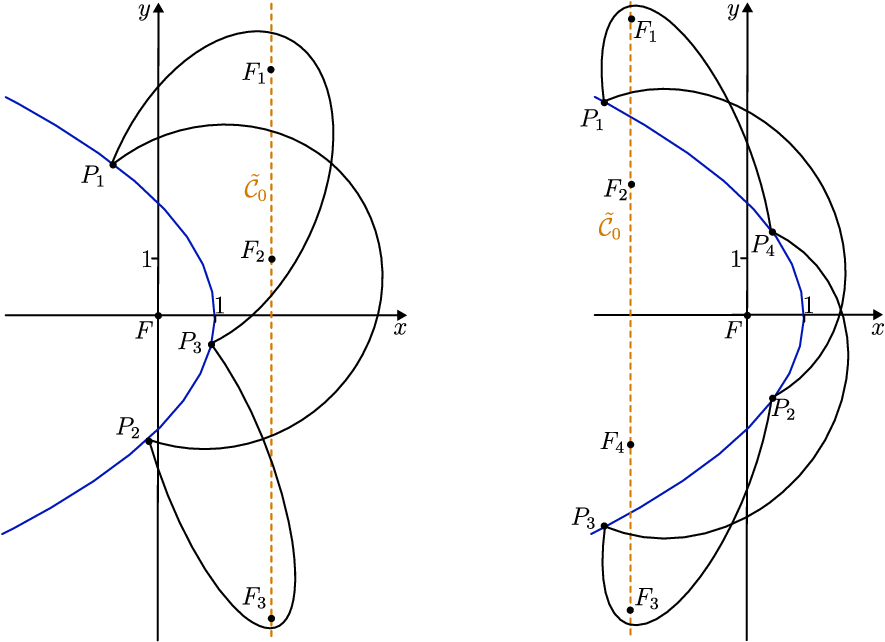}
\caption{\textit{Left:} Picture for a $3-$periodic orbit along ${\cal{B}}_P$ for $(f,a,s_0)=(1,3,2)$. \\
\textit{Right:} Picture for a $4-$periodic orbit along ${\cal{B}}_P$ for $(f,a,s_0)=(1,3,-2)$.}
\label{fig:parabolic_F_orbits}
\end{figure}

One difference as compared to many other cases is the following 

\begin{prop}
There exist no $2-$periodic orbits for the system with parabolic boundary enclosing $F$.
\end{prop}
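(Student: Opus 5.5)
We will translate 2-periodicity into a statement about the second foci and rule it out using the foci-line. A 2-periodic orbit bounces back and forth between two reflection points $P_1,P_2$ on ${\cal{B}}_P$. The flight arcs are traversed in both directions, so the sequence of second foci must satisfy $F_{i+2}=F_i$.

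By the construction of Section \ref{sec:foci_caustic}, consecutive second foci $F_i,F_{i+1}$ are the two intersection points of the foci-line $\tilde{\cal C}_0: x=s_0$ with a circle centered at the reflection point $P$. That circle has radius $2a-|FP|$, so $F_i$ and $F_{i+1}$ are mirror images of each other in the horizontal line $y=y_P$ through $P$. Write $F_i=(s_0,u_i)$. The reflection at $P_1$ gives $u_{i+1}=2y_{P_1}-u_i$, and the reflection at $P_2$ gives $u_{i+2}=2y_{P_2}-u_{i+1}$. Hence $F_{i+2}=F_i$ forces $y_{P_1}=y_{P_2}$, unless $F_i=F_{i+1}$.

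Case $y_{P_1}=y_{P_2}$ with $P_1\neq P_2$. Since ${\cal{B}}_P$ is a graph $x=-\frac{y^2}{4f}+f$ over $y$, equal $y$-coordinates give $P_1=P_2$, which is a contradiction.

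Case $F_i=F_{i+1}$. Then the orbit reflects into the same Kepler ellipse, so the arc is reversed at $P_1$. Hence the velocity at $P_1$ is normal to ${\cal{B}}_P$, and likewise at $P_2$. So we need a single Kepler ellipse (focus $F$, second focus on $x=s_0$) that meets the confocal parabola orthogonally at two distinct points. I would show this is impossible in one of two ways.

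\begin{enumerate}
\item Optical argument. The ellipse normal at $P$ bisects the angle $\angle FPF_i$, while the parabola's tangent at $P$ bisects the angle between $PF$ and the horizontal ray. Orthogonality would then force $PF_i$ to point horizontally but oppositely, i.e.\ $F_i$ lies on the horizontal line through $P$ on the far side. Two such points would give $y_{P_1}=y_{P_2}=u_i$, contradicting the graph property above.
\item Limit argument. The two-periodic families in the hyperbolic case (Section on low periodic orbits) correspond to $R=0$, that is, ellipses confocal with $F,F'$. In the limit $F'\to\infty$ with $R\to\infty$ this family disappears.
\end{enumerate}
I will use the first, direct argument. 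A third check: the limiting curve $\bar{\cal D}$ has $r_0\to$ a nonzero value, so the caustic never degenerates to a point.

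The main obstacle is the second case: making rigorous the orthogonal-hit geometry, and checking that degenerate (collision or rectilinear) ellipses are excluded under the admissible range of Proposition \ref{prop:admiss_range_B_P}.
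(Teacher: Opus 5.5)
Your proof is correct, and it takes a different route from the paper. The paper argues through a limit. In the hyperbolic setting, 2-periodic orbits occur only when $R=0$ or $r_0=0$. Neither can survive as $R\to\infty$, since the foci-caustic circle then approximates the foci-line.

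You instead work directly in the limiting system. Your mirror relation $u_{i+1}=2y_P-u_i$ makes two consecutive reflections act on the foci-line as the translation $u\mapsto u+2(y_{P_2}-y_{P_1})$. This translation is trivial only if $y_{P_1}=y_{P_2}$, which is impossible because ${\cal{B}}_P$ is a graph over $y$.

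Your version is self-contained and rigorous. It does not need the characterization ``$R=0$ or $r_0=0$'' to carry over through a limit in which both circles blow up onto the same line, a step the paper leaves unjustified. It also shows geometrically why no period-2 family exists. The paper's version is shorter and fits the elliptic-curve framework it uses for $n\ge 3$.

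Some refinements:
\begin{enumerate}
\item Derive the mirror relation from the reflection law rather than from intersecting a line with a circle. $F_{i+1}$ is the mirror image of $F_i$ in the line through $P$ obtained by reflecting $PF$ in the wall tangent, and for a parabola focused at $F$ this line is horizontal. Then $u_{i+1}=2y_P-u_i$ holds in every case, including $F_{i+1}=F_i$, which is just $u_i=y_P$. Your separate normal-incidence case and its optical argument therefore become unnecessary; that argument only re-verifies $u_i=y_P$ there.
\item The reason $F_{i+2}=F_i$ must hold is that the particle is back on the same Kepler ellipse after two reflections. It is not that arcs are retraced: in Kepler billiards the two arcs of a 2-periodic orbit need not coincide.
\item Your aside that $r_0$ tends to a nonzero value is inaccurate, since $r_0\to\infty$ together with $R$.
\end{enumerate}
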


\begin{proof}

For two-periodic orbits, either $R=0$ or $r_0=0$. The case $R=0$ cannot occur because the limiting procedure is equivalent to $R\rightarrow \infty$. The second case cannot occur since the foci-line is approximated by the foci-caustic circle in the large-$R$ limit, and thus $r_0\neq 0$. Hence, there exist no two-periodic orbits in this setup.
\end{proof}

\section{Limiting Case 3: The integrable Boltzmann system}
\label{sec:limits3}
The system, in which a particle under the influence of an (attractive) Kepler force is reflected at a line boundary, was first considered by Ludwig Boltzmann \cite{B} to illustrate his ergodic hypothesis. 
In \cite{GJ} this system was shown to be integrable where the conserved quantity corresponds to a foci-circle whose center is the mirror image of the Kepler force center with respect to the straight line. This integrable Boltzmann system was further analyzed in \cite{Felder} where an associated elliptic curve was constructed for the dynamics by considering involutions of consecutive Kepler orbits.  Periodicity conditions for this system were subsequently analyzed in \cite{GR}.
 
The integrable Boltzmann system can be seen as another limiting case in which $a_{\mathbf{K}}\rightarrow 0$. In this case both hyperbolic arcs ${\cal{B}}_F$ and ${\cal{B}}_{F'}$ degenerate into the straight line 
$$\ell: x_\ell=-c_{\mathbf{K}}.$$ 

The admissible parameter range can be carried over directly from the hyperbolic boundary cases in the limit $a_{\mathbf{K}}\rightarrow 0$. We get
\begin{align*}
R\geq 2c_{\mathbf{K}}-2a~~&\mbox{for}~0\leq R<c_{\mathbf{K}},\\
R\leq 2a~~&\mbox{for}~R\geq c_{\mathbf{K}}.
\end{align*}
For a graphical representation see Figure \ref{fig:admissible_range_boltzmann}.

\begin{figure}[htb]
\centering
\includegraphics[scale=0.9]{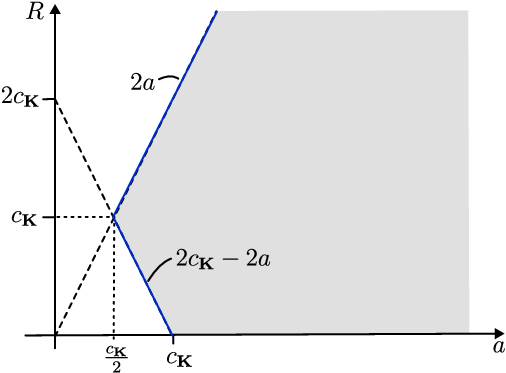}
\caption{Admissible range $(a,R)$ for the Boltzmann-system.}
\label{fig:admissible_range_boltzmann}
\end{figure}


The foci-caustic circle $S(x_{0}, r_{0})$ exists in this case. The functions $x_0$ and $r_0$ are now simply given by:
\begin{align*}
x_0&=-\frac{1}{2c_{\mathbf{K}}}\cdot (R^2-4a^2),\\
r_0&=2a.
\end{align*}
This directly follows by using the expressions for $x_0$ and $r_0$ in the hyperbolic boundary cases and taking the straightforward limit $a_{\mathbf{K}}\rightarrow 0$.
Note that in this system, another degeneracy appears: the radius of the foci-caustic circle is independent of $R$. The limiting procedure can also be applied directly to the associated elliptic curves for the hyperbolic boundaries which gives:


$$
y^2=-r_0^2\cdot (t+1)\cdot \left(t^2\cdot \frac{R^2}{r_0^2}-t\cdot \frac{x_0^2-r_0^2-R^2}{r_0^2}+1\right).
$$

By normalization, we consider the case $c_{\mathbf{K}}=1$ in order to facilitate comparison with previous studies, e.g. \cite{GJ,Felder}.  The total Kepler energy in an appropriate normalization is related to the orbital semi-major axis via  $E=-\frac{1}{2a}$. We define

$$
\tilde{R}:=R\cdot |E|,
$$
and

$$
D:=\frac{R^2E^2-4E^2-1}{2E}.
$$

The substitution
$$
t=-\frac{2(D+2E)}{\tilde{R}}\cdot \xi, ~~~y=\sqrt{\tilde{R}/r_0^2} \cdot y
$$
then transform the elliptic curve into
$$
y^2=\left[2(D+2E)\cdot \xi-\tilde{R}\right]\cdot \left[4\tilde{R}(D+2E)^2\xi^2+2(D+2E)(D^2+2DE-2)\xi +\tilde{R} \right].
$$

This is exactly the elliptic curve that G. Felder derived \cite{Felder}.
We thus establish a link between our study and his work. For the dynamical analysis, we have nothing to add to his work.


\section{Conclusive remarks}
In this work, we complete the analysis, started in \cite{JZ,JZ2}, of the (periodic) dynamics of a particle under the influence of an (attractive) Kepler potential with a conic section boundary. Analogously to \cite{JZ2}, we showed that when the boundary is one of the two possible hyperbolic arcs ${\cal{B}}_F$ or ${\cal{B}}_{F'}$ the consecutive second foci $F_i$ of the Kepler ellipses all lie on a common \textit{foci-circle} and that the secants formed by $\overline{F_iF_{i+1}}$ are tangent to another circle, the so-called \textit{foci-caustic circle}. Applying the results of the classical works on the relation between the Poncelet porism and elliptic curves \cite{GH, GH2} we determine the associated elliptic curve for each individual case from which conditions for $n$-periodic orbits can be derived.
By several limiting procedures, we discuss the dynamics of the related systems with parabolic arcs or a straight-line boundary (also known as \textit{integrable Boltzmann system}). In particular, we were able to reproduce results already obtained in \cite{Felder,GR} for the integrable Boltzmann system. 

In the plane, the conformal mapping $\mathbb{C} \to \mathbb{C}, z\mapsto z^2$  induces a correspondence between Kepler billiards and Hooke billiards \cite{Panov, TZ1}. The results for the dynamics with an elliptical boundary \cite{JZ2} as well as the results obtained in this work can be carried over to the Hooke billiards via this conformal correspondence.

\section*{Dedication and Acknowledgement}

\begin{center}
\textit{To Kai, celebrating his third 20th birthday - with admiration, and confidence that the sequence continues.}
\end{center}
 
L.Z. is supported by DFG ZH 605-4/1, Research Funds for Central Universities of China and the Liaoning Revitalization Talents Program.

\bibliographystyle{cas-model2-names} 

\end{document}